\newtheorem{theorem}{Theorem}[section]
\newtheorem{proposition}[theorem]{Proposition}
\newtheorem{corollary}[theorem]{Corollary} 
\newtheorem{lemma}[theorem]{Lemma}
\newtheorem{definition}[theorem]{Definition}
\begin{document}

   \title{Vertex distortion detects the unknot}

   \author{Marion Campisi, Nicholas Cazet, David Crncevic, Tasha Fellman, Phillip Kessler, Nikolas Rieke, Vatsal Srivastava, and Luis Torres}

\maketitle

\begin{abstract}
The first two authors introduced vertex distortion of lattice knots and showed   that the vertex distortion of the unknot is 1. It was conjectured that the vertex distortion of a knot class is 1 if and only if it is trivial. We use Denne and Sullivan's lower bound on Gromov distortion to bound the vertex distortion of non-trivial lattice knots. This bounding allows us to conclude that a knot class has vertex distortion 1 if and only if it is trivial. We also show that vertex distortion does not have a universal upper bound and provide a vertex distortion calculator.

\end{abstract}


\section{Introduction }

The {\it Gromov distortion}  of a knot $K$  is denoted $\delta(K)$ and defined by \[ \delta(K):=\sup\limits_{a,b\in K} \frac{d_K(a,b)}{d(a,b)},\]  where $d_K(a,b)$ is the length of the shortest path from $a$ to $b$ in $K$ and $d$ is the Euclidean metric. Gromov defined distortion of rectifiable curves in \cite{gromov1983} where he showed that the Gromov distortion of any knot is at least $\pi/2$ and exactly $\pi/2$ if and only if the knot is a round circle. Denne and Sullivan later showed that the Gromov distortion of a non-trivial knot is non-trivially bounded from below.

  \begin{theorem}[Denne-Sullivan '04 \cite{Denne_2004}]
 \label{thm:den}
 For any non-trivial tame knot $K$, \[ \delta(K)\geq \frac{5\pi}{3}.\]
 \end{theorem}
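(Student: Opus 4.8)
The natural route, and the one I would take, is to reduce the statement to a quantitative geometric fact about a line that meets $K$ in four points. Gromov's bound $\delta(K)\ge\pi/2$ already uses the pair realizing the diameter of $K$; for a \emph{knotted} curve one expects a much stronger constraint, and the right tool to supply it is the theory of quadrisecants. So the first step is to invoke the (deep) theorem of Denne that every non-trivial tame knot admits an \emph{essential alternating quadrisecant}: a straight line $\ell$ crossing $K$ in four points $a,b,c,d$ appearing in this order along $\ell$ but splitting $K$ into four subarcs whose cyclic order along $K$ is $a,c,b,d$, subject moreover to the essentiality hypothesis that each secant subarc is homotopically non-trivial in the complement of the rest of $K$. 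Everything after this is metric geometry.

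Granting the quadrisecant, I would next extract from it a short list of point pairs on $K$ whose intrinsic distance is forced to be much larger than their Euclidean distance. The alternating pattern singles out subarcs with interleaved endpoints on $\ell$ — for instance the arc $\alpha$ from $a$ to $c$ missing $b,d$, and the arc $\beta$ from $b$ to $d$ missing $a,c$ — and because $b$ lies in the chord $\overline{ac}$ while $c$ lies in $\overline{bd}$, essentiality prevents either arc from hugging $\ell$: the loops $\alpha\cup\overline{ac}$ and $\beta\cup\overline{bd}$ must link the remaining knot, hence must link each other. A Schur/Gromov-type comparison then bounds from below the length of such an essential linked arc in terms of the chord joining its endpoints; together with the complementary (and obviously long) arcs, this forces $d_K(a,c)$ to be large compared to $|ac|$ — and similarly for other pairs among $a,b,c,d$. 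Optimizing these competing lower bounds over the three normalized gap lengths $|ab|,|bc|,|cd|$ and over which arc carries the most winding should be exactly what produces the constant $5\pi/3$ (the extremal configuration being, morally, a pair of doubled-over round arcs), establishing $\delta(K)\ge 5\pi/3$.

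The main obstacle is this last estimate together with its optimization: quantifying precisely how far an essential, linked arc is compelled to stray from the secant line — and ruling out configurations in which the necessary winding is distributed cheaply between the two arcs — is delicate, and it is here that the non-sharpness of the bound $5\pi/3$ enters (one does not expect the extremal configuration to be realized by an actual knot). The other genuinely hard ingredient is imported rather than proved: the existence of an essential alternating quadrisecant for every non-trivial tame knot is itself a substantial theorem, and without it the metric argument has nothing to grip.
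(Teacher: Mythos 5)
This theorem is not proved in the paper at all: it is quoted from Denne and Sullivan \cite{Denne_2004} and used purely as an imported black box (its only role here is to be converted into a vertex-distortion bound via Gromov $1$-distortion), so there is no in-paper argument to compare yours against. What you have written is, in effect, a roadmap of the original Denne--Sullivan strategy -- existence of essential secant structure for non-trivial knots (via Denne's quadrisecant work) followed by a quantitative estimate on how far an essential arc must detour from the chord joining its endpoints, and a final optimization yielding $5\pi/3$. That roadmap points in the right direction, and correctly identifies that the existence statement is a separate deep theorem to be cited rather than reproved.

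However, as a proof there is a genuine gap, and it sits exactly where the theorem lives: the ``Schur/Gromov-type comparison'' bounding the length of an essential, linked arc in terms of its chord, and the subsequent optimization over the gap lengths, are asserted (``should be exactly what produces the constant $5\pi/3$'') rather than carried out. Linking alone does not give a length bound relative to the chord $\overline{ac}$ without controlling where the other arc sits, and the constant $5\pi/3$ is not the output of any off-the-shelf comparison theorem; in Denne--Sullivan this step is the bulk of the work, done by analyzing a shortest essential secant and proving precise inequalities of the form $d_K(a,b)\ge d(a,b)+\text{(quantified detour)}$ before optimizing. Until that quantitative lemma is stated precisely and proved, your outline establishes nothing beyond Gromov's $\pi/2$ bound; for the purposes of this paper the honest course is what the authors do -- cite \cite{Denne_2004} -- or else reproduce that analysis in full.
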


Let $[K]$ be a knot class. The {\it Gromov distortion} of $[K]$ is denoted $\delta[K]$ and defined by \[ \delta[K]:= \inf\limits_{K\in[K]} \delta(K), \] where the infimum is taken over all conformation in the knot class. Gromov asked if there is a universal upper bound on this invariant. Pardon answered this negatively in  \cite{Pardon_2011}. He showed that the distortion of the $(p,q)$-torus knot class tends to infinity as $p$ and $q$ tend to infinity.  In \cite{Blair_2020},  Blair, Campisi, Taylor, and Tomova showed that Gromov distortion is bounded below by bridge number and bridge distance, and exhibited an infinite family of knots for which their bound is arbitrarily stronger than Pardon's. No knot class has a calculated Gromov distortion.

 The first two authors introduced vertex distortion of lattice knots as a discrete reformulation of Gromov distortion \cite{Campisi_2021}. A {\it polygonal knot} is a knot that consists of finitely many line segments called {\it sticks}.  A {\it lattice knot} is a polygonal knot in the cubic lattice $\mathbb{L}^3=(\mathbb{R}\times\mathbb{Z}\times\mathbb{Z})\cup(\mathbb{Z}\times\mathbb{R}\times\mathbb{Z})\cup (\mathbb{Z}\times\mathbb{Z}\times\mathbb{R}).$  For a lattice knot $K$, the points ${\bf a}\in V(K):=K\cap\mathbb{Z}^3$ are called {\it vertices}; bold variables will represent vertices. The {\it vertex distortion} of a lattice knot $K$ is defined as

\[
\delta_V(K):=\max\limits_{{\bf a,b}\in V(K)} \frac{d_{K}({\bf a,b})}{d_1({\bf a,b})},
\]
 where $d_{K}({\bf a,b})$ is the length of the shortest path from ${\bf a}$ to ${\bf b}$ in $K$ and $d_1({\bf a,b})=\sum_{i=1}^3 |a_i-b_i|$.
The {\it vertex distortion} of a knot class $[K]$ is given by
\[
\delta_V[K]:=\inf\limits_{K \in [K]} \delta_V(K),
\]
where the infimum is taken over all lattice knot conformations representing the knot class $[K]$.

In \cite{Campisi_2021}, vertex distortion was shown  to satisfy many of the same properties as Gromov distortion: 
{\it 
\begin{itemize}
\item[(i)] If $\delta_V(K)=1$, then $K$ is unknotted.

\item[(ii)] Up to isometry, the only two conformation with vertex distortion 1 are shown in Figure \ref{fig:triv}.
\item[(iii)] There exists a sequence of  minimal stick $(n,n+1)$- torus knots $T_n$ such that $\delta_V(T_n)\to\infty$ as $n\to\infty$.
\item[(iv)] If $U$ is the unknot, then $\delta_V[U]=1.$ 

\end{itemize}
 }

 \begin{figure}
 \centering
  \includegraphics[scale=.6]{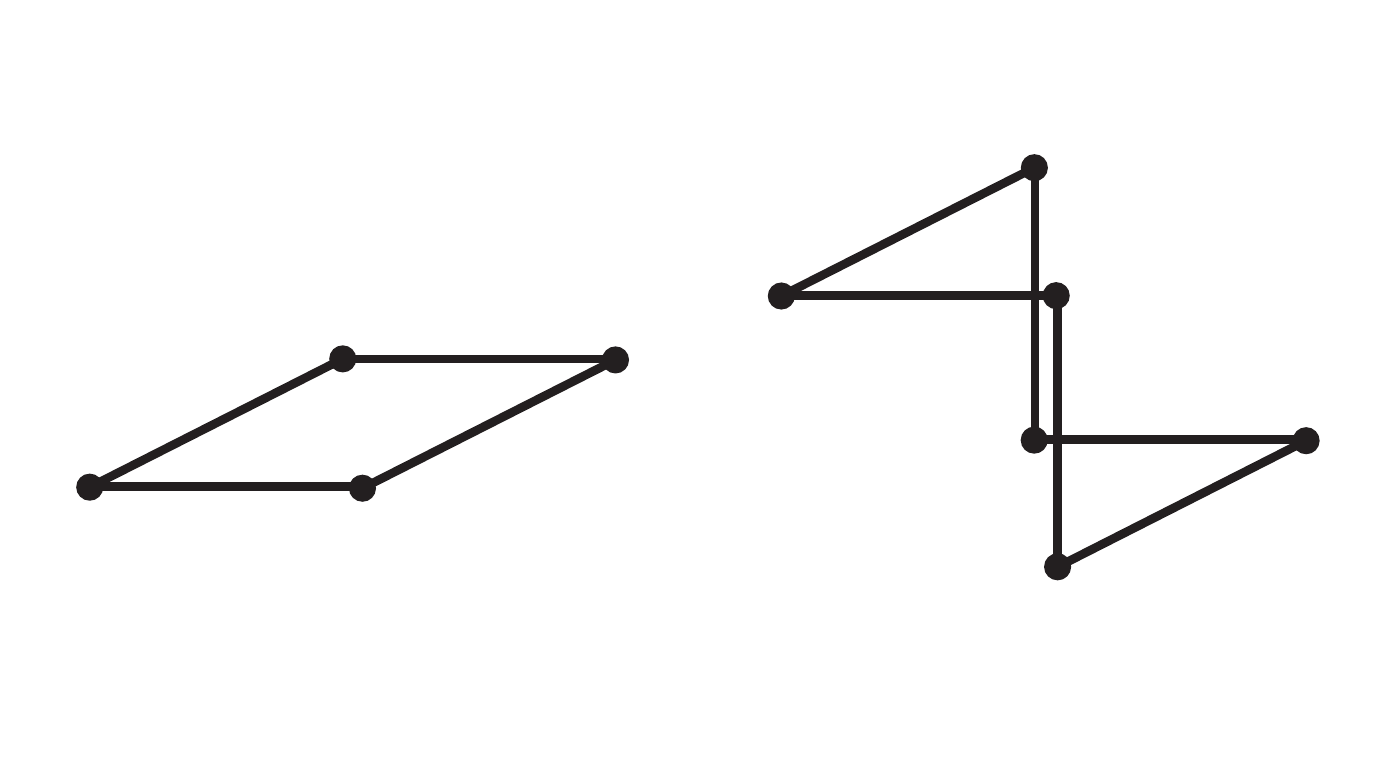}
 \label{fig:triv}
\caption{Conformations with vertex distortion 1.}
 \end{figure}
 
 \noindent It was conjectured that $\delta_V[K]=1$ if and only if $[K]$ is trivial and \[ \delta_V[T_{p,q}]\to\infty \ \text{as} \ p,q\to\infty.\]

The goal of this paper is to affirm these two conjectures. These conjectures are corollaries to the two theorems of this paper:

   \begin{theorem}
   
   If $K$ is a non-trivial lattice knot, then \[ \delta_V(K)\geq\frac{5\pi}{3\sqrt3}-1\approx 2.02.\]
   \label{fig:bound}
   \end{theorem}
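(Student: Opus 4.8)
Here is how I would prove the theorem.

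\medskip

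The plan is to deduce this from Denne--Sullivan's bound (Theorem~\ref{thm:den}) by discretizing: pick a pair of points of $K$ at which the Gromov distortion is (nearly) attained, replace them by nearby vertices, and estimate how much the ratio $d_K/d_1$ deteriorates.

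First I would extract from Theorem~\ref{thm:den} an honest pair of points $a\neq b$ on $K$ with $d_K(a,b)\ge\frac{5\pi}{3}\,d(a,b)$. The supremum defining $\delta(K)$ need not be attained, but $K\times K$ is compact and $d_K/d$ is continuous off the diagonal, and on a lattice knot a subarc of small diameter meets at most one corner, around which $d_K/d$ stays $\le\sqrt2<\frac{5\pi}{3}$; hence a maximizing sequence subconverges to a pair $(a,b)$ with $a\neq b$ and $\frac{d_K(a,b)}{d(a,b)}=\delta(K)\ge\frac{5\pi}{3}$. I would also note that $d(a,b)\ge 1$: two disjoint sticks of $K$ lie at Euclidean distance at least $1$, while for two distinct sticks meeting at a (right-angled) corner $c$ one has $d_K(a,b)\le|a-c|+|c-b|$ and $d(a,b)=\sqrt{|a-c|^2+|c-b|^2}$, so the ratio is $\le\sqrt2$; thus $d(a,b)<1$ would contradict the ratio being $\ge\frac{5\pi}{3}$.

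Next let $e_a$, $e_b$ be the sticks of $K$ containing $a$ and $b$; these are axis-parallel, so all but one coordinate of each point of $e_a$ (resp.\ $e_b$) is an integer, and the two integer points of $e_a$ flanking $a$ lie on $e_a$. Choose vertices $\mathbf{a}\in V(K)\cap e_a$ and $\mathbf{b}\in V(K)\cap e_b$ by rounding $a$ and $b$, coordinatewise, \emph{toward each other} (when $a$ is itself a vertex, take $\mathbf{a}=a$). Then $\mathbf{a}$ differs from $a$ in one coordinate by less than $1$, so $d_K(a,\mathbf{a})<1$ and similarly $d_K(b,\mathbf{b})<1$; rounding toward each other does not increase any coordinate difference, so $d_1(\mathbf{a},\mathbf{b})\le d_1(a,b)$; and $\mathbf{a}=\mathbf{b}$ is impossible, since it would make $e_a$ and $e_b$ share a point and hence force ratio $\le\sqrt2$ by the previous paragraph. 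Using $d_K(a,b)\ge\frac{5\pi}{3}\,d(a,b)>2$, this gives
\[
\delta_V(K)\ \ge\ \frac{d_K(\mathbf{a},\mathbf{b})}{d_1(\mathbf{a},\mathbf{b})}\ >\ \frac{d_K(a,b)-2}{d_1(\mathbf{a},\mathbf{b})}\ \ge\ \frac{\frac{5\pi}{3}\,d(a,b)-2}{d_1(\mathbf{a},\mathbf{b})}.
\]

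Finally I would bound $d_1(\mathbf{a},\mathbf{b})$, splitting on the size of $d(a,b)\ge 1$. If $d(a,b)\ge\frac{2}{\sqrt3}$, the crude bound $d_1(\mathbf{a},\mathbf{b})\le d_1(a,b)\le\sqrt3\,d(a,b)$ gives $\delta_V(K)>\frac{5\pi}{3\sqrt3}-\frac{2}{\sqrt3\,d(a,b)}$, which increases in $d(a,b)$ and equals $\frac{5\pi}{3\sqrt3}-1$ exactly at $d(a,b)=\frac{2}{\sqrt3}$. If $1\le d(a,b)<\frac{2}{\sqrt3}$, then $d_1(\mathbf{a},\mathbf{b})=1$: whenever $\mathbf{a}$ and $\mathbf{b}$ differ in a coordinate, integrality forces $a$ and $b$ to differ there by at least $1$, and for $d(a,b)$ this small each coordinate of $\mathbf{a}-\mathbf{b}$ has absolute value at most $1$, so $d_1(\mathbf{a},\mathbf{b})$ counts the coordinates in which they differ, which is at most $d(a,b)^2<\frac43$, hence $1$; then $\delta_V(K)>\frac{5\pi}{3}\,d(a,b)-2\ge\frac{5\pi}{3}-2>\frac{5\pi}{3\sqrt3}-1$. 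Combining the two cases proves the theorem. The step I expect to be the crux is exactly this discretization: the generic inequality $d_1\le\sqrt3\,d$ is too wasteful for $d(a,b)$ near $1$, so one must use the integrality of lattice coordinates (in a fully careful version, a short classification of how two sticks of $\mathbb{L}^3$ can sit relative to one another) to control $d_1(\mathbf{a},\mathbf{b})$; there is also bookkeeping in the coordinatewise rounding when $e_a\parallel e_b$ and in the degenerate cases where $a$ or $b$ is already a vertex.
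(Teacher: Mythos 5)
Your proposal is correct in substance, but it takes a genuinely different route from the paper. The paper never touches the extremal pair for Gromov distortion directly: it builds machinery around midpoints of edges (genericity, the classification in Proposition~\ref{prop:bigratio} of antipodal non-generic midpoints with the ordering \eqref{3}), shows via $2^n$-scaling that $\delta_V(2^{n+1}K)=\delta_V(2^nK)$, identifies the Gromov $1$-distortion with $\delta_V(2K)$ (Proposition~\ref{lem:3}), bounds $\delta^{(1)}(K)\ge\delta(K)/\sqrt3\ge 5\pi/(3\sqrt3)$, and then loses the ``$-1$'' only in the case $\delta_V(2K)>\delta_V(K)$, where Lemma~\ref{lem:bigger} produces antipodal non-generic midpoints whose neighbor vertices drop $d_K$ by exactly $1$ while keeping the same $d_1$. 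You instead discretize the (attained, by your corner/compactness argument) extremal pair for $\delta(K)$ itself: you lose up to $2$ in $d_K$ by moving each point to a vertex of its own stick, and you recover the constant either because $d(a,b)\ge 2/\sqrt3$ makes that loss cost at most $1$ after dividing by $d_1\le\sqrt3\,d$, or, when $1\le d(a,b)<2/\sqrt3$, because integrality forces $d_1(\mathbf a,\mathbf b)=1$ and gives the much stronger bound $5\pi/3-2$. This is more elementary and self-contained (it needs none of the scaling or $\delta^{(1)}$ apparatus), at the price of the extremal-pair and rounding analysis; the paper's route, by contrast, yields the reusable identity $\delta^{(1)}(K)=\delta_V(2K)$, which it then recycles to prove Theorem~\ref{prop:bounding1}, something your argument does not give for free. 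One caveat you should repair, and which you yourself flagged: ``rounding toward each other'' performed simultaneously can increase a coordinate difference when $e_a\parallel e_b$ and the two non-integer coordinates lie in the same unit interval (e.g.\ $a_1=0.3$, $b_1=0.7$ round to $1$ and $0$), which would break both $d_1(\mathbf a,\mathbf b)\le d_1(a,b)$ and the Case~B claim that a differing integer coordinate forces $|a_i-b_i|\ge1$; rounding sequentially (round $a$ toward $b$, then $b$ toward $\mathbf a$, so both land on the same integer in that situation) restores $|\mathbf a_i-\mathbf b_i|\le|a_i-b_i|$ in every coordinate and the rest of your argument goes through verbatim.
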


\begin{theorem}
\label{prop:bounding1}
For any lattice knot $K$, 
\[ \delta(K)\geq \delta_V(K)\geq \delta(K)/\sqrt3-1.\]

\end{theorem}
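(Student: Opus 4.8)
The plan is to prove the two inequalities separately; the right‑hand one carries all the content.

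The inequality $\delta(K)\ge\delta_V(K)$ is formal: since the $\ell^1$‑norm dominates the Euclidean norm, $d_1\ge d$, so for any vertices $\mathbf a,\mathbf b\in V(K)$ one has $\frac{d_K(\mathbf a,\mathbf b)}{d_1(\mathbf a,\mathbf b)}\le\frac{d_K(\mathbf a,\mathbf b)}{d(\mathbf a,\mathbf b)}\le\delta(K)$, and maximizing over vertices gives $\delta_V(K)\le\delta(K)$. For the inequality $\delta_V(K)\ge\delta(K)/\sqrt3-1$, note first that $\delta_V(K)\ge1$ always (the shortest arc between two vertices is a concatenation of unit lattice edges, so its length is at least the $\ell^1$‑distance of its endpoints), so the claim is trivial when $\delta(K)\le\sqrt3$. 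Assuming $\delta(K)>\sqrt3$, every pair $a,b\in K$ with $\frac{d_K(a,b)}{d(a,b)}\le\sqrt3$ contributes at most $\sqrt3<\delta(K)$ to the supremum defining $\delta(K)$, so it is enough to produce, for each pair with $\frac{d_K(a,b)}{d(a,b)}>\sqrt3$, vertices $\mathbf a,\mathbf b$ with $\frac{d_K(\mathbf a,\mathbf b)}{d_1(\mathbf a,\mathbf b)}\ge\frac{1}{\sqrt3}\cdot\frac{d_K(a,b)}{d(a,b)}-1$, and then pass to the supremum.

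I would next record two facts about the cubic lattice. (1) Any two non‑adjacent edges of a lattice knot lie at Euclidean distance at least $1$: two axis‑parallel lattice lines are either parallel, hence $\ge1$ apart, or perpendicular and incident to a common lattice point, and an embedded polygon cannot run through that point along two non‑adjacent edges, which a short coordinate computation on the two segments then turns into the distance bound. (2) Consequently, if $d(a,b)<1$ then $a,b$ lie on the same or on two adjacent edges, where an elementary estimate using the right‑angle corners (and $\ell(K)\ge4$) gives $\frac{d_K(a,b)}{d(a,b)}\le\sqrt2$. Hence any pair with $\frac{d_K(a,b)}{d(a,b)}>\sqrt3$ has $d(a,b)\ge1$ and lies on non‑adjacent axis‑parallel edges $e_1\ni a$, $e_2\ni b$.

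Fix such a pair. Since $e_1$ is axis‑parallel, the point of $\mathbb Z^3$ nearest $a$ already lies on $e_1$ (or at an endpoint), hence is a vertex of $K$; in fact every vertex of $K$ on $e_1$ within arc‑distance $1$ of $a$ is available, and the displacement from $a$ to such a vertex points along a single coordinate axis (similarly on $e_2$). A suitable such choice of $\mathbf a\in e_1$ and $\mathbf b\in e_2$ — oriented so that, in passing from the vector $b-a$ to the integer vector $\mathbf b-\mathbf a$, no coordinate grows in absolute value, while the total arc‑length cost $d_K(a,\mathbf a)+d_K(b,\mathbf b)$ is kept at most $1$ — gives simultaneously
\[
d_1(\mathbf a,\mathbf b)\ \le\ \|b-a\|_1\ \le\ \sqrt3\,d(a,b)\qquad\text{and}\qquad d_K(\mathbf a,\mathbf b)\ \ge\ d_K(a,b)-1 .
\]
With $d(a,b)\ge1$ this yields
\[
\delta_V(K)\ \ge\ \frac{d_K(\mathbf a,\mathbf b)}{d_1(\mathbf a,\mathbf b)}\ \ge\ \frac{d_K(a,b)-1}{\sqrt3\,d(a,b)}\ \ge\ \frac{1}{\sqrt3}\cdot\frac{d_K(a,b)}{d(a,b)}-1,
\]
and taking the supremum over the relevant pairs completes the proof.

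The one delicate point is choosing $\mathbf a,\mathbf b$ so that both displayed estimates hold with the stated constants: orienting the roundings to keep every coordinate of $b-a$ from growing is what removes any additive error in the denominator (and explains the factor $\sqrt3$, attained only in the fully diagonal case $\mathbf b-\mathbf a=(\pm1,\pm1,\pm1)$), but this can force a vertex to move nearly a full unit along its edge, so one must split into cases according to where $a,b$ sit within their unit lattice sub‑arcs — leaning again on the axis‑aligned rigidity of the lattice — to keep the total arc‑length loss at most $1$.
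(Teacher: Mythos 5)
Your route is genuinely different from the paper's (which passes through $2^n$-scaling, Gromov $1$-distortion, and the antipodal/non-generic midpoint analysis), and most of it is fine: the first inequality, the reduction to pairs with $\rho_2(a,b)>\sqrt3$, and the facts that such pairs lie on non-adjacent, hence unit-separated, axis-parallel edges all check out. The gap is exactly at the point you flag as delicate, and your proposed resolution of it is false. You require the roundings $a\mapsto\mathbf a$, $b\mapsto\mathbf b$ to be chosen so that \emph{no coordinate} of $b-a$ grows in absolute value \emph{and} the total arc-length cost $d_K(a,\mathbf a)+d_K(b,\mathbf b)$ is at most $1$, claiming a case split on the positions of $a,b$ within their edges achieves this. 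When the two edges are perpendicular this can be impossible: take $a=(0.1,0,0)$ on the $x$-edge from $(0,0,0)$ to $(1,0,0)$ and $b=(5,3.9,4)$ on the $y$-edge from $(5,3,4)$ to $(5,4,4)$. Rounding $a$ changes only its $x$-coordinate and $b$ only its $y$-coordinate, and the only choice in which neither the $x$- nor the $y$-difference grows is $\mathbf a=(1,0,0)$, $\mathbf b=(5,3,4)$, whose cost is $0.9+0.9=1.8>1$; every cost-$\le1$ choice grows some coordinate. So the pair of displayed estimates is not delivered by the rule you state, and the final chain of inequalities does not follow as written.

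The approach is salvageable, but it needs a different selection rule, not just a finer case split. One fix: demand only that the aggregate $\ell^1$-distance not grow, $d_1(\mathbf a,\mathbf b)\le\|b-a\|_1$, rather than per-coordinate monotonicity. In the perpendicular case a ``toward'' rounding decreases $\|b-a\|_1$ by exactly its cost and an ``away'' rounding increases it by exactly its cost; so if the two toward-costs $\alpha,\beta$ satisfy $\alpha+\beta\le1$ round both toward, and if $\alpha+\beta>1$ round the cheaper-toward point away and the other toward, giving cost $\min(\alpha,\beta)+1-\max(\alpha,\beta)\le1$ and a net decrease $\alpha+\beta-1>0$ in $d_1$. (Alternatively, keep the both-toward rounding of cost $c=\alpha+\beta\in(1,2]$ and use $\frac{d_K(a,b)-c}{\|b-a\|_1-c}\ge\frac{d_K(a,b)}{\|b-a\|_1}$, valid since $d_K(a,b)\ge\|b-a\|_1>c$ here because $\mathbf a\ne\mathbf b$ are distinct lattice points.) With either repair, both displayed estimates (or a substitute at least as strong) hold in all cases, and your argument goes through; it is then a self-contained, elementary alternative to the paper's scaling/midpoint machinery, which instead extracts the additive $-1$ from the antipodal non-generic configuration of Lemma \ref{lem:bigger}. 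As submitted, however, the key claim is incorrect and the proof has a genuine gap.
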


 Section \ref{section:defns} gives definitions and technical lemmas on vertex distortion needed to prove Theorem \ref{fig:bound} and Theorem \ref{prop:bounding1}.  This section compares the vertex distortion of $K$ with the distortion ratios between midpoints of $K$'s edges. A notion of generality is defined for pairs of midpoints. Section \ref{section:defns} culminates with Proposition \ref{prop:bigratio} that shows if two midpoints have a higher distortion ratio than the vertex distortion of the lattice knot that they belong to, then the midpoints are antipodal and non-generic. 
 
 Section \ref{double} defines the $2^n$-scaling of a lattice knot. The doubling of a lattice knot is done by multiplying the coordinates of each point by 2. This section also defines Gromov 1-distortion by replacing the Euclidean metric $d$ with the $d_1$ metric in the definition of Gromov distortion. Section \ref{double} culminates with Proposition \ref{lem:3} showing that the Gromov 1-distortion of a lattice knot is to equal the vertex distortion of its double.

  Section \ref{section:proof} proves  Theorem \ref{fig:bound}. The proof translates Denne and Sullivan's  bound on Gromov distortion to vertex distortion using Gromov 1-distortion and the results of Sections \ref{section:defns} and \ref{double}. A corollary is included that affirms $\delta_V[K]=1$ if and only if $[K]$ is trivial.
  
 Section \ref{section:misc} proves Theorem \ref{prop:bounding1}, affirms \[ \delta_V[T_{p,q}]\to\infty \ \text{as} \ p,q\to\infty,\]    and gives a link to our vertex distortion calculator.

\section{Vertex Distortion and Midpoints}
\label{section:defns}

This section includes midpoints of edges in its study of vertex distortion. A notion of generality is defined for pairs of midpoints such that generic midpoints have a distortion ratio no larger than the vertex distortion of their knot. The lemmas of this section are used to prove Proposition \ref{prop:bigratio} that says if a pair of midpoints have a higher distortion ratio than the vertex distortion of their knot, then they are antipodal and non-generic. This classification is used in the proof of Theorem \ref{fig:bound}.

\begin{definition} A {\it stick} of a lattice knot $K$ is a maximal line segment of $K$ in $\mathbb{L}^3$, i.e. a line segment contained in $K$ that is not contained in any longer line segment of $K$. \end{definition}

Each stick of length $n$ can be decomposed into $n$ unit length line segments.

\begin{definition} An {\it edge} of a lattice knot is a unit length line segment connecting two vertices. \end{definition}

A stick  parallel to the $x$-, $y$-, or $z$-axis is called an $x$-, $y$-, or $z$-stick, respectively. We define an $x$-, $y$-, and $z$-edge analogously.

\begin{lemma}

A lattice knot has an even number of edges. 
\label{lem:even}
\end{lemma}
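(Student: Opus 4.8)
The plan is to two-color the lattice $\mathbb{Z}^3$ like a three-dimensional checkerboard and observe that every edge of $K$ joins two points of opposite color; since $K$ is a simple closed polygonal curve, its unit segments form a single cycle on the vertex set $V(K)$, and a cycle admits such a proper two-coloring only when it has even length.

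Concretely, I would first fix the combinatorial model. Because $K$ is an embedded circle, every point of $K$ — in particular every vertex — has a neighborhood in $K$ homeomorphic to an interval, so the graph with vertex set $V(K)$ and edge set the unit segments of $K$ is connected and $2$-regular, hence a single cycle $C_m$, where $m$ is the number of edges. (It is worth noting here that $V(K)$ really is the vertex set of this graph, i.e. that every corner of $K$ lies in $\mathbb{Z}^3$: two distinct sticks that meet cannot be collinear, by maximality of sticks, so they point in different axis directions, and two sticks in different axis directions can only meet at a lattice point.) Next, define $\sigma\colon \mathbb{Z}^3 \to \mathbb{Z}/2\mathbb{Z}$ by $\sigma(\mathbf{a}) = a_1 + a_2 + a_3 \bmod 2$. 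The two endpoints of any edge differ in exactly one coordinate and by exactly $1$, so $\sigma$ takes different values on them.

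To conclude, traverse $C_m$ once, listing the vertices in cyclic order as $\mathbf{v}_0, \mathbf{v}_1, \dots, \mathbf{v}_{m-1}, \mathbf{v}_m = \mathbf{v}_0$. Induction on $i$ together with the previous observation gives $\sigma(\mathbf{v}_i) \equiv \sigma(\mathbf{v}_0) + i \pmod 2$, and setting $i = m$ with $\mathbf{v}_m = \mathbf{v}_0$ forces $m \equiv 0 \pmod 2$. I do not expect any real obstacle: the parity step is immediate once the model is in place, and the only mild subtlety is the bookkeeping of that model — checking that the unit segments of $K$ genuinely assemble into a cycle graph on $V(K)$, equivalently that the length of $K$ is an integer equal to its number of edges. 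In fact this argument yields the marginally stronger fact that every lattice knot has even length.
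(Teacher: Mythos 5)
Your argument is correct, but it runs along a different line from the paper's. You two-color $\mathbb{Z}^3$ by the parity of the coordinate sum, note that every unit edge joins vertices of opposite color, and conclude that the cycle formed by the edges of $K$ must have even length — the standard ``cycles in a bipartite graph are even'' argument. The paper instead orients $K$ and counts signed edges axis by axis: since the knot closes up, the number of positive $x$-edges equals the number of negative $x$-edges, and likewise for $y$ and $z$, so the total edge count is twice the number of positive edges. The two proofs are equally elementary; the paper's version gives the slightly finer fact that the number of edges parallel to each coordinate axis is itself even, while yours delivers the evenness of the total directly and, as you note, makes explicit the bookkeeping the paper leaves implicit — that the corners of a lattice knot are lattice points and that its unit segments assemble into a single cycle on $V(K)$, so that ``number of edges'' and ``length'' coincide. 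Either proof supports the later use of the lemma (evenness of total length, so that antipodes of vertices are vertices).
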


\begin{proof}
Orient the lattice knot. Consider an $x$-edge to be positive if the $x$-coordinate increases following the orientation and negative if the $x$-coordinate decreases. There is an equal number of positive $x$-edges as negative $x$-edges since the lattice knot is a closed loop. This is true for $y$- and $z$-edges as well. Therefore, there is an equal number of negative edges as positive edges in the lattice knot. Since each edge is either positive or negative, the total number of edges is divisible by 2. 
\end{proof}

 \begin{definition}
A point $a^*$ on a knot $K$ is {\it antipodal} to  $a\in K$  if $d_K(a,a^*)$ is half the total length of $K$.
 
 \end{definition}   
 
 \begin{definition}
A point on a lattice knot $K$ is a {\it midpoint} if it is the midpoint of some edge; the vertices of this edge are called the {\it neighbor vertices} or {\it neighbors} of the midpoint.

\end{definition}

\begin{definition}

 Let $V_m(K)$ be the set of vertices and midpoints of a lattice knot $K$.
 
 \end{definition}

We will denote the neighbors of a midpoint $p$ by ${\bf n_p^-}$ and ${\bf n_p^+}$.

    \begin{definition}
    Let $p$ and $q$ be midpoints of a lattice knot with neighbors ${\bf n_p^-},{\bf n_p^+}$ and ${\bf n_q^-},{\bf n_q^+}$. The pair $p$ and $q$ are  {\it generic} if $d_1(p,{\bf n_q^-})\neq d_1(p,{\bf n_q^+})$ or $d_1({\bf n_p^-},q)\neq d_1({\bf n_p^+},q)$.
    
    \end{definition}

    \begin{figure}[ht]
\centering

\includegraphics[scale=.8]{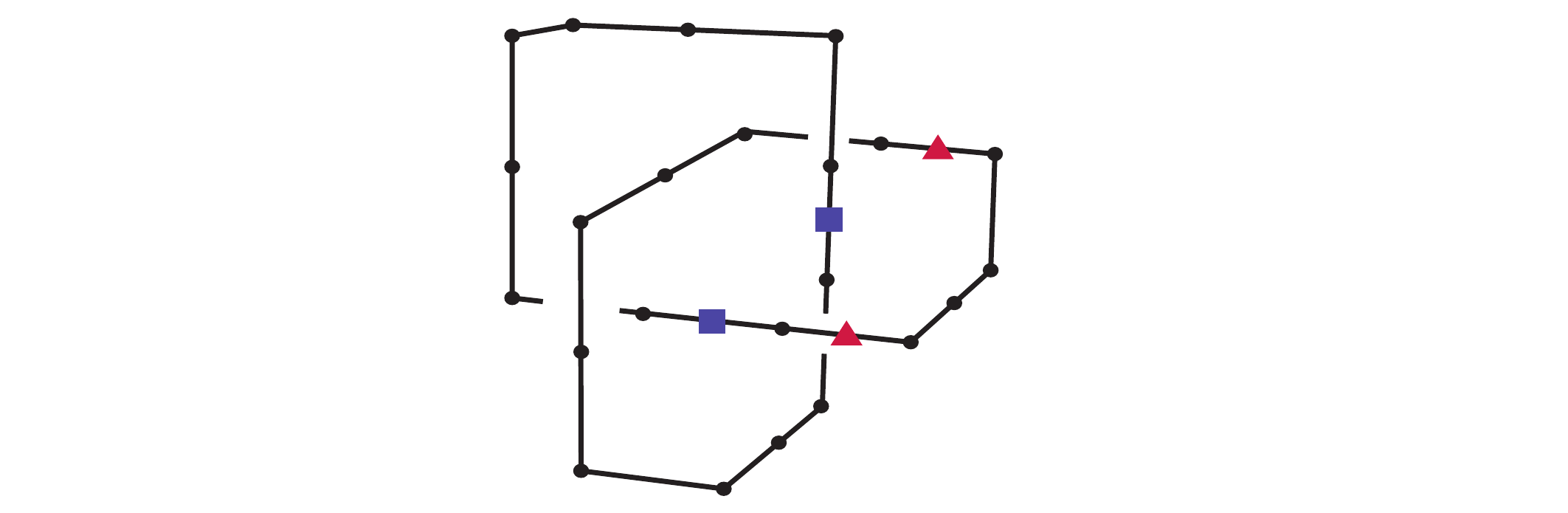}

\caption{A pair of generic midpoints, squares, and non-generic midpoints, triangles.}
\label{fig:generic}
\end{figure}

  \begin{lemma} 

If two midpoints $p$ and $q$ of a lattice knot $K$ are non-generic, then they share a non-integer coordinate, the edges containing $p$ and $q$ are parallel, and   \[d_1(p,{\bf n_q^-})=d_1(p,{\bf n_q^+})=d_1(p,q)+0.5=d_1({\bf n_p^-},q)=d_1({\bf n_p^+},q). \] 
\label{lem:nongeneric}

\end{lemma}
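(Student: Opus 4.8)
The plan is a short computation after adopting coordinates adapted to one of the two midpoints, preceded by one structural observation. The observation: a midpoint of a lattice knot has exactly one non-integer coordinate, and that coordinate is the one corresponding to the axis direction of the edge it bisects. This is immediate from the definitions, since an edge is a unit segment joining two lattice points, which therefore differ in a single coordinate by $\pm 1$. So, relabeling the axes (a $d_1$- and $d_K$-preserving symmetry of $\mathbb L^3$), I may assume the edge through $q$ is a $j$-edge, ${\bf n_q^\pm} = q \pm \tfrac12 e_j$, the coordinate $q_j$ lies in $\mathbb Z + \tfrac12$, and the other two coordinates of $q$ are integers.

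Next I would use the non-generic hypothesis, which by definition means both $d_1(p,{\bf n_q^-}) = d_1(p,{\bf n_q^+})$ and $d_1({\bf n_p^-},q) = d_1({\bf n_p^+},q)$; only the first is needed. Since ${\bf n_q^+}$ and ${\bf n_q^-}$ differ only in the $j$-th coordinate, the $d_1$-distances from $p$ to them agree term by term except in the $j$-th slot, so the equality collapses to $|p_j - q_j + \tfrac12| = |p_j - q_j - \tfrac12|$. Squaring turns this into $2(p_j - q_j) = 0$, hence $p_j = q_j$. In particular $p_j \in \mathbb Z + \tfrac12$, so by the structural observation the edge through $p$ is also a $j$-edge; this proves that the edges through $p$ and $q$ are parallel and that $p$ and $q$ share the non-integer coordinate $p_j = q_j$.

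Finally, with $p_j = q_j$ fixed, the remaining coordinate differences $p_i - q_i$ ($i \ne j$) are integers and $d_1(p,q) = \sum_{i \ne j} |p_i - q_i|$. Substituting ${\bf n_q^\pm} = q \pm \tfrac12 e_j$ and ${\bf n_p^\pm} = p \pm \tfrac12 e_j$ and using $p_j = q_j$, each of the four quantities $d_1(p,{\bf n_q^-})$, $d_1(p,{\bf n_q^+})$, $d_1({\bf n_p^-},q)$, $d_1({\bf n_p^+},q)$ equals $\sum_{i \ne j} |p_i - q_i| + \tfrac12 = d_1(p,q) + 0.5$, which is the asserted identity.

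The computation is routine; the only delicate points are the structural observation (which is what upgrades ``$p_j$ is a half-integer'' to ``the $p$-edge is parallel to the $q$-edge'') and the absolute-value step, where squaring conveniently sidesteps the case analysis that would otherwise have to discard the impossibility $\tfrac12 = -\tfrac12$.
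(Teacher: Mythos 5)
Your proof is correct and follows essentially the same route as the paper's: a direct coordinate computation showing that equality of the distances from $p$ to the two neighbors of $q$ forces $p$ and $q$ to share their (half-integer) non-integer coordinate, from which parallelism and the four $d_1(p,q)+0.5$ identities follow. The only cosmetic differences are that you replace the paper's case analysis with the squaring trick and observe that one half of the non-generic condition already suffices.
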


\begin{proof}

  For any pair of midpoints $p$ and $q$ and neighbors ${\bf n_p}$ and ${\bf n_q}$, we have that  \[ d_1(p,{\bf n_q})=d_1(p,q)+0.5 \quad \text{or} \quad d_1(p,{\bf n_q})=d_1(p,q)-0.5,\]    
    and 
    \[ d_1({\bf n_p},q)=d_1(p,q)+0.5 \quad \text{or} \quad d_1({\bf n_p},q)=d_1(p,q)-0.5.\]   
     
    Therefore, a pair of midpoints $p$ and $q$ are non-generic if and only if     
  \[d_1(p,{\bf n_q^-})=d_1(p,{\bf n_q^+})=d_1(p,q)+0.5 \] and   \[d_1({\bf n_p^-},q)=d_1({\bf n_p^+},q)=d_1(p,q)+0.5,\] since it is not possible for $d_1(p,{\bf n_q^-})=d_1(p,{\bf n_q^+})=d_1(p,q)-0.5=d_1({\bf n_p^-},q)=d_1({\bf n_p^+},q).$
  
   Suppose that $p=(p_x,p_y,p_z)$ and $q=(q_x,q_y,q_z)$. The neighbor vertices of $q$ are $(q_x\pm0.5,q_y,q_z), (q_x,q_y\pm0.5,q_z),$ or $(q_x,q_y,q_z\pm0.5)$. In the first case,  $d_1(p,{\bf n_q^-})= d_1(p,{\bf n_q^+})$ if and only if $p_x=q_x$.  With neighbor vertices $(q_x\pm0.5,q_y,q_z)$ and $p_x=q_x$, we have that $d_1(p,q)=|p_y-q_y|+|p_z-q_z|$ and $d_1(p,{\bf n_q^-})=d_1(p,{\bf n_q^+})=0.5+|p_y-q_y|+|p_z-q_z|=d_1(p,q)+0.5$. In the case that the neighbor vertices of $q$ are  $(q_x,q_y\pm0.5,q_z)$ or $(q_x,q_y,q_z\pm0.5)$, an analogous argument shows that $d_1(p,{\bf n_q^-})= d_1(p,{\bf n_q^+})$ only if  $d_1(p,{\bf n_q^-})=d_1(p,{\bf n_q^+})=d_1(p,q)+0.5$. Likewise, $d_1(q,{\bf n_p^-})= d_1(q,{\bf n_p^+})$ only if  $d_1(q,{\bf n_p^-})=d_1(q,{\bf n_p^+})=d_1(p,q)+0.5$.

The non-integer coordinate of a midpoint determines whether it belongs to an $x$-, $y$-, or $z$-edge. Since non-generic midpoints share a non-integer coordinate, the edges of non-generic midpoints are parallel. 

\end{proof}

\begin{figure}[ht]
\centering

\begin{overpic}[unit=.5mm,scale=1]{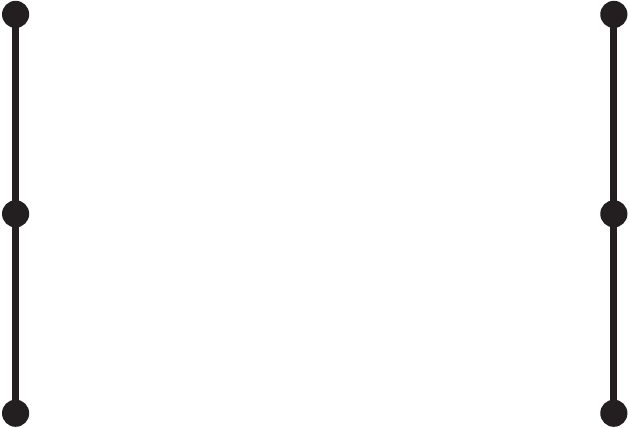} \put(10,42){$p$} \put(9,80){${\bf n_p^+}$}  \put(9,-2){${\bf n_p^-}$} \put(115,42){$q$}  \put(110,80){${\bf n_q^+}$}  \put(110,-2){${\bf n_q^-}$} 
\end{overpic}
\vspace{1cm}
\caption{Projection of non-generic midpoints.}
\label{fig:nongeneric}
\end{figure}

We use distortion functions  to translate the bound of Theorem \ref{thm:den}.

\begin{definition}
    For a knot $K$,  define $\rho_1, \rho_2: K \times K \to \mathbb{R}$ by 
    \[
        \rho_1(x, y) = \frac{d_K(x, y)}{d_1(x, y)} 
    \]

    \[
        \rho_2(x, y) = \frac{d_K(x, y)}{d(x, y)} 
    \]
   for $x\neq y$ and $\rho_1(x,x)=\rho_2(x,x)=1$. \end{definition}

\begin{lemma}
    \label{lem:nearvertex}
   For any midpoint $m=(m_1,m_2,m_3)$ and vertex ${\bf v}=(v_1,v_2,v_3)$ of a lattice knot, there exists a neighbor ${\bf n_m}$ of $m$ such that
    \[
        \rho_1({\bf n_m}, {\bf v}) \geq \rho_1(m, {\bf v}).
    \]
\end{lemma}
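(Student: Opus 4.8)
The plan is to take ${\bf n_m}$ to be the neighbor of $m$ that is \emph{closer to ${\bf v}$ in the $d_1$ metric}, and to lean on the elementary inequality $d_K\geq d_1$ that holds on any lattice knot.

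First I would record two facts. (1) For any two points $a,b$ of a lattice knot $K$ we have $d_K(a,b)\geq d_1(a,b)$: a shortest path in $K$ is a concatenation of axis-parallel segments, so its length equals the sum of the total variations of its three coordinate functions, and each such total variation is at least the corresponding displacement $|a_i-b_i|$. In particular $\rho_1(x,y)\geq 1$ whenever $x\neq y$. (2) If $e$ is the (unit) edge containing $m$, then $d_K(m,{\bf n_m^-})=d_K(m,{\bf n_m^+})=\tfrac12$, since half of $e$ is shorter than the complementary arc of $K$.

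Next I would pin down the $d_1$-geometry of the configuration. Suppose $e$ is an $x$-edge, so $m=(m_1,m_2,m_3)$ with $m_1$ a half-integer and ${\bf n_m^\pm}=(m_1\pm\tfrac12,m_2,m_3)$; the $y$- and $z$-edge cases are identical. Since $v_1$ is an integer and $m_1-\tfrac12,\,m_1+\tfrac12$ are consecutive integers, $v_1$ is not strictly between them, so exactly one of the two neighbors --- call it ${\bf n_m}$ --- satisfies $d_1({\bf n_m},{\bf v})=d_1(m,{\bf v})-\tfrac12$ (the other satisfies $d_1(\cdot,{\bf v})=d_1(m,{\bf v})+\tfrac12$). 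If ${\bf v}$ is itself a neighbor of $m$ the lemma is trivial, so assume it is not; then $d_1(m,{\bf v})\geq\tfrac32$, and in particular $d_1({\bf n_m},{\bf v})=d_1(m,{\bf v})-\tfrac12>0$.

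Finally, write $D=d_K(m,{\bf v})$ and $E=d_1(m,{\bf v})$. By the reverse triangle inequality together with fact (2) we get $d_K({\bf n_m},{\bf v})\geq D-\tfrac12$, and by fact (1) we get $D\geq E$. Since subtracting the same positive quantity from the numerator and denominator of a fraction that is at least $1$ can only increase it,
\[
\rho_1({\bf n_m},{\bf v})=\frac{d_K({\bf n_m},{\bf v})}{E-\tfrac12}\ \geq\ \frac{D-\tfrac12}{E-\tfrac12}\ \geq\ \frac{D}{E}=\rho_1(m,{\bf v}),
\]
which is exactly the assertion. I do not expect a genuine obstacle here: the one idea is to choose the $d_1$-closer (rather than $d_K$-closer) neighbor, after which only the reverse triangle inequality and $d_K\geq d_1$ are used. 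The minor points to be careful about are the degenerate case ${\bf v}\in\{{\bf n_m^-},{\bf n_m^+}\}$ and checking that the denominator $E-\tfrac12$ stays positive.
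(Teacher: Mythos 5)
Your proof is correct, and it takes a noticeably leaner route than the paper's. The paper splits into cases $v_1>m_1$ versus $v_1<m_1$ to fix which neighbor is $d_1$-closer, and then invokes the antipodal vertex of ${\bf v}$ (via the even-length Lemma \ref{lem:even}, so that the antipode of a vertex is a vertex) to compute the arc-length changes \emph{exactly}, $d_K({\bf n_m^\pm},{\bf v})=d_K(m,{\bf v})\pm\tfrac12$; it then checks four cases, in two of which the numerator grows while the denominator shrinks, and in the other two of which it uses $d_K\geq d_1$ to see that $\frac{D-1/2}{E-1/2}\geq\frac{D}{E}$. You bypass the antipode and parity considerations entirely: by always selecting the $d_1$-closer neighbor and bounding $d_K({\bf n_m},{\bf v})\geq d_K(m,{\bf v})-\tfrac12$ with the triangle inequality for $d_K$ (using $d_K(m,{\bf n_m})=\tfrac12$), every case collapses to the single inequality $\frac{D-1/2}{E-1/2}\geq\frac{D}{E}$ for $D\geq E$, with $d_K\geq d_1$ justified by the coordinatewise total-variation argument. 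What you lose is only the exact values of $d_K$ to the two neighbors, which the lemma does not need and which are not reused elsewhere; what you gain is a uniform, shorter argument that also explicitly handles the degenerate situation where ${\bf v}$ is a neighbor of $m$ and the denominator $E-\tfrac12$ could otherwise vanish, a point the paper's write-up passes over silently.
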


\begin{proof}
    Let ${\bf n_m^-}$ and ${\bf n_m^+}$ be the neighbors of $m$. Without loss of generality, assume that $m$ is a midpoint of an $x$-edge and 
    \[
        {\bf n_m^-} = (m_1-0.5, m_2, m_3) \quad\quad\quad  {\bf n_m^+} = (m_1+0.5, m_2, m_3).
    \]
     Since $m$ is the midpoint of an $x$-edge,  we have $v_1 \neq m_1$. Figure \ref{fig:lemma} gives intuition for the following cases.
     
     If $v_1 > m_1$, then
    \begin{equation}
        \label{b1>a1}
        \begin{split}
            d_1({\bf n_m^-, v}) = d_1(m, {\bf v}) + 0.5
        \end{split}
        \quad\quad\quad
        \begin{split}
            d_1({\bf n_m^+, v}) = d_1(m, {\bf v}) - 0.5.
        \end{split}
    \end{equation}
    
  If  $v_1 < m_1$, then
    \begin{equation}
        \label{b1<a1}
        \begin{split}
            d_1({\bf n_m^-, v}) = d_1(m, {\bf v}) - 0.5
        \end{split}
        \quad\quad\quad
        \begin{split}
            d_1({\bf n_m^+, v}) = d_1(m, {\bf v}) + 0.5.
        \end{split}
    \end{equation}

\begin{figure}

\centering

\begin{overpic}[unit=.5mm,scale=.85]{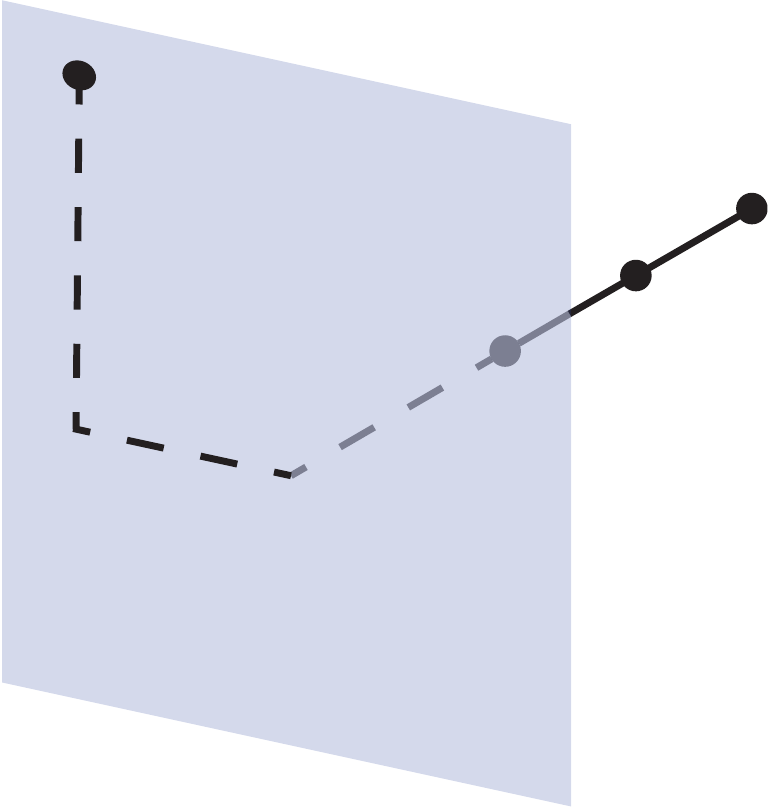} \put(18,122){$\bf v$} \put(65,20){$x=v_1$} \put(110,83){$m$}\put(130,95){$\bf n_m^-$}\put(88,71){$\bf n_m^+$}

\end{overpic}
\caption{The case $v_1>m_1$ seen in  (\ref{b1>a1}).}
\label{fig:lemma}
\end{figure}

    Lemma \ref{lem:even} tells us that the total length of a lattice knot is even. Therefore, the $d_K$-distance between antipodal points on a lattice knot is an integer, and the antipode of a vertex is a vertex. For the vertex ${\bf v}$, consider its antipodal vertex ${\bf v}^*\neq m$. The antipode ${\bf v}^*$ lies on one of the two injective paths ${\bf v} \to {\bf n_m^-} \to m$ or ${\bf v} \to {\bf n_m^+} \to m$ along $K$.

  Case 1:  ${\bf v}^*$ lies on the path ${\bf v} \to {\bf n_m^-} \to m$. Then,
        \[
            \begin{split}
                d_K({\bf n_m^-}, {\bf v}) = d_K(m, {\bf v}) + 0.5
            \end{split}
            \quad\quad\quad
            \begin{split}
                d_K({\bf n_m^+}, {\bf v}) = d_K(m, {\bf v}) - 0.5.
            \end{split}
        \]

      If $v_1 > m_1$, then by  (\ref{b1>a1})
        \[
            \rho_1({\bf n_m^+,v}) = \frac{d_K(m,{\bf v})-0.5}{d_1(m,{\bf v})-0.5} \geq \frac{d_K(m,{\bf v})}{d_1(m,{\bf v})}=\rho_1(m,{\bf v}),
        \] since  $d_K(m,{\bf v})\geq d_1(m,{\bf v}).$
        
            If $v_1 < m_1$, then by (\ref{b1<a1})
        \[
            \rho_1({\bf n_m^-,v}) = \frac{d_K(m,{\bf v})+0.5}{d_1(m,{\bf v})-0.5} \geq \rho_1(m,{\bf v}).
        \]
Case 2: ${\bf v}^*$ lies on the path ${\bf v} \to {\bf n_m^+} \to m$. Then,
        \[
            \begin{split}
                d_K({\bf n_m^-, v}) = d_K(m, {\bf v}) - 0.5
            \end{split}
            \quad\quad\quad
            \begin{split}
                d_K({\bf n_m^+, v}) = d_K(m, {\bf v}) + 0.5.
            \end{split}
        \]
        
             If $v_1 > m_1$, then by (\ref{b1>a1})
        \[
            \rho_1({\bf n_m^+,v}) = \frac{d_K(m,{\bf v})+0.5}{d_1(m,{\bf v})-0.5} \geq \rho_1(m,{\bf v}).
        \]
        
      If $v_1 < m_1$, then by  (\ref{b1<a1})
        \[
            \rho_1({\bf n_m^-,v}) = \frac{d_K(m,{\bf v})-0.5}{d_1(m,{\bf v})-0.5} \geq \rho_1(m,{\bf v}),
        \] since $d_K(m,{\bf v})\geq d_1(m,{\bf v}).$

\end{proof}

    \begin{lemma}
    \label{lem:generic}
    If two midpoints $p$ and $q$ of a lattice knot are generic, then there exist neighbor vertices ${\bf n_p}$ of $p$ and  ${\bf n_q}$ of $q$ such that
    \[
        \rho_1({\bf n_p, n_q}) \geq \rho_1(p, q).
    \]
\end{lemma}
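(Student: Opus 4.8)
The plan is to reduce the two-midpoint case to the midpoint--vertex case of Lemma~\ref{lem:nearvertex}: first replace $p$ by a suitable neighbor vertex ${\bf n_p}$ without decreasing the ratio $\rho_1(\,\cdot\,,q)$, and then apply Lemma~\ref{lem:nearvertex} to the midpoint $q$ and the vertex ${\bf n_p}$ to replace $q$ by a neighbor ${\bf n_q}$, again without decreasing the ratio. I would use two standing facts: that $d_K(x,y)\geq d_1(x,y)$ for all $x,y$ on a lattice knot, since any path in $K$ is a concatenation of axis-parallel segments and so its length dominates the $\ell^1$ displacement; and the observation from the proof of Lemma~\ref{lem:nongeneric} that for every neighbor ${\bf n_q}$ of $q$ one has $d_1(p,{\bf n_q})\in\{\,d_1(p,q)-0.5,\ d_1(p,q)+0.5\,\}$, and symmetrically for the neighbors of $p$.

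Since the genericity hypothesis is symmetric in $p$ and $q$, I may assume $d_1({\bf n_p^-},q)\neq d_1({\bf n_p^+},q)$. By the dichotomy just recalled, exactly one of these two values equals $d_1(p,q)-0.5$; let ${\bf n_p}$ be the corresponding neighbor, so that $d_1({\bf n_p},q)=d_1(p,q)-0.5$. Note $d_1({\bf n_p},q)\geq 0.5$ because ${\bf n_p}$ is a vertex and $q$ a midpoint, hence $d_1(p,q)\geq 1$ and all denominators below are positive. As ${\bf n_p}$ is an endpoint of the edge of which $p$ is the midpoint, it lies at arc length $0.5$ from $p$, so $d_K({\bf n_p},q)=d_K(p,q)+0.5$ or $d_K({\bf n_p},q)=d_K(p,q)-0.5$ (the latter occurring, for instance, when $q$ is antipodal to $p$). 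In the first case
\[
\rho_1({\bf n_p},q)=\frac{d_K(p,q)+0.5}{d_1(p,q)-0.5}\ \geq\ \frac{d_K(p,q)}{d_1(p,q)}=\rho_1(p,q),
\]
since the numerator increased and the denominator decreased; in the second case
\[
\rho_1({\bf n_p},q)=\frac{d_K(p,q)-0.5}{d_1(p,q)-0.5}\ \geq\ \frac{d_K(p,q)}{d_1(p,q)}=\rho_1(p,q),
\]
using that $(a-c)/(b-c)\geq a/b$ whenever $a\geq b>c>0$, with $a=d_K(p,q)\geq d_1(p,q)=b>0.5=c$. Either way, $\rho_1({\bf n_p},q)\geq\rho_1(p,q)$.

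Finally, ${\bf n_p}$ is a vertex and $q$ is a midpoint, so Lemma~\ref{lem:nearvertex} applied with $m=q$ and ${\bf v}={\bf n_p}$ produces a neighbor ${\bf n_q}$ of $q$ with $\rho_1({\bf n_q},{\bf n_p})\geq\rho_1(q,{\bf n_p})$. By symmetry of $\rho_1$ this chains with the previous step to
\[
\rho_1({\bf n_p},{\bf n_q})\ \geq\ \rho_1({\bf n_p},q)\ \geq\ \rho_1(p,q),
\]
as desired. I expect the only real subtlety to be the choice of ${\bf n_p}$: genericity is precisely what guarantees that some neighbor of $p$ (or, after the reduction, of $q$) is \emph{strictly} $d_1$-closer to the other midpoint, and that strictness---dropping the denominator by $0.5$---is what powers both displayed inequalities; the sign of the change in $d_K$ is then harmless, absorbed by the two one-line estimates, and no further case analysis on the geometry of the knot is needed beyond what Lemma~\ref{lem:nearvertex} already packages.
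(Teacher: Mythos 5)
Your proposal is correct and follows essentially the same route as the paper's proof: genericity supplies a neighbor of one midpoint that is strictly $d_1$-closer (dropping the denominator by $0.5$), the two-case estimate on the $\pm 0.5$ change in $d_K$ (using $d_K\geq d_1$) shows the ratio does not decrease, and Lemma~\ref{lem:nearvertex} then handles the remaining midpoint. The only difference is that you replace $p$ first and then apply Lemma~\ref{lem:nearvertex} to $q$, whereas the paper fixes a neighbor of $q$ first and applies the lemma to $p$; this is immaterial by symmetry.
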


\begin{proof}

Assume that $d_1(p,{\bf n_q^+})\neq d_1(p,{\bf n_q^-})$; an analogous argument can used if $d_1({\bf n_p^+},q)\neq d_1({\bf n_p^-},q)$. Label the neighbors ${\bf n_q^\pm}$ of $q$ such that $d_1(p, {\bf n_q^-} )=d_1(p,q)-0.5$ and $d_1(p,{\bf n_q^+})=d_1(p,q)+0.5$. 
 
Either $d_K(p,{\bf n_q^-})=d_K(p,q)-0.5$ or $d_K(p,{\bf n_q^-})=d_K(p,q)+0.5$. Since \[ \frac{d_K(p,q)+0.5}{d_1(p,q)-0.5}> \frac{d_K(p,q)-0.5}{d_1(p,q)-0.5}\geq \rho_1(p,q),\] we have that
    \[
        \rho_1(p,{\bf n_q^-}) \geq \rho_1(p,q).
    \]
    
Then, Lemma \ref{lem:nearvertex}  implies that there exists  ${\bf n_p} \in \{ {\bf n_p^+,n_p^- } \} $ such that
 \[
        \rho_1({\bf n_p},{\bf n_q^-}) \geq \rho_1(p,q).
    \]

\end{proof}

\begin{lemma}

If two midpoints $p$ and $q$ of an oriented lattice knot are non-generic and non-antipodal, then there exist neighbor vertices ${\bf n_p}$ of $p$ and ${\bf n_q}$ of $q$ such that \[ \rho_1({\bf n_p, n_q})\geq \rho_1(p,q).\]
\label{lem:notgeneric}
\end{lemma}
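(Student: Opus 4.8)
The plan is to use Lemma~\ref{lem:nongeneric} to fix the local picture and then read off every relevant shortest-path length by tracking the two arcs of $K$ cut out by $p$ and $q$. By Lemma~\ref{lem:nongeneric} the edges of $p$ and $q$ are parallel and share a non-integer coordinate, so after an isometry we may assume $p$ and $q$ lie on $x$-edges with $p_x=q_x$, hence
\[
  {\bf n_p^\pm}=(p_x\pm 0.5,\,p_y,\,p_z),\qquad {\bf n_q^\pm}=(p_x\pm 0.5,\,q_y,\,q_z).
\]
A direct computation then gives $d_1({\bf n_p^+},{\bf n_q^+})=d_1({\bf n_p^-},{\bf n_q^-})=d_1(p,q)$ for the two ``sign-matched'' pairs of neighbor vertices, and $d_1({\bf n_p^+},{\bf n_q^-})=d_1({\bf n_p^-},{\bf n_q^+})=d_1(p,q)+1$ for the two ``mismatched'' pairs; since we may assume $p\neq q$, we have $(p_y,p_z)\neq(q_y,q_z)$, so all four neighbor vertices are distinct.

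Next I would analyze arcs. The points $p,q$ cut $K$ into two arcs, and as $p$ and $q$ are non-antipodal these arcs have different lengths $\ell<L$, so $d_K(p,q)=\ell$. Each arc consists of a half-edge, then a whole number of edges, then a half-edge, so $\ell,L\in\mathbb{Z}$ and $L\ge\ell+1$. Let ${\bf a_p},{\bf a_q}$ denote the neighbors of $p,q$ on the short arc and ${\bf b_p},{\bf b_q}$ the neighbors on the long arc, so the short arc reads $p,{\bf a_p},\dots,{\bf a_q},q$ and the long arc reads $q,{\bf b_q},\dots,{\bf b_p},p$ (these use only the arc structure, not the orientation). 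Removing any two of these four vertices leaves two arcs of $K$ whose lengths are immediate from $\ell$ and $L$, and since $K$ is a metric circle, $d_K$ between the two removed vertices is the smaller of these two lengths.

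The proof then splits on whether ${\bf a_p}$ and ${\bf a_q}$ are sign-matched. If they are, then so are ${\bf b_p},{\bf b_q}$, and I take the pair $({\bf b_p},{\bf b_q})$: its $d_1$ equals $d_1(p,q)$, while the two arcs between ${\bf b_p}$ and ${\bf b_q}$ have lengths $L-1$ (the interior of the long arc) and $\ell+1$ (through $p$ and $q$), so $d_K({\bf b_p},{\bf b_q})=\min(L-1,\ell+1)\ge\ell=d_K(p,q)$. If ${\bf a_p}$ and ${\bf a_q}$ are mismatched, then ${\bf a_p}$ and ${\bf b_q}$ are sign-matched, and I take the pair $({\bf a_p},{\bf b_q})$: its $d_1$ equals $d_1(p,q)$, while the two arcs between ${\bf a_p}$ and ${\bf b_q}$ have lengths $\ell$ (running ${\bf a_p}\to\dots\to{\bf a_q}\to q\to{\bf b_q}$) and $L$ (running ${\bf a_p}\to p\to{\bf b_p}\to\dots\to{\bf b_q}$), so $d_K({\bf a_p},{\bf b_q})=\min(\ell,L)=\ell=d_K(p,q)$. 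In either case the chosen pair of neighbor vertices has $d_1$ equal to $d_1(p,q)$ and $d_K$ at least $d_K(p,q)$, hence $\rho_1\ge\rho_1(p,q)$, as required.

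The routine parts are the $d_1$ computation for axis-parallel edges and the fact that distance on a knot is the length of the shorter arc. The point that needs care is the arc bookkeeping in each configuration — correctly accounting for which half-edges lie on which side — together with the use of non-antipodality to guarantee $L\ge\ell+1$, which is precisely what forces $\min(L-1,\ell+1)\ge\ell$ in the matched case. I do not anticipate a genuine obstacle: the content is a careful case analysis built on Lemma~\ref{lem:nongeneric} rather than a new idea.
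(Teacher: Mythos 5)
Your proof is correct and follows essentially the same route as the paper's: your matched/mismatched dichotomy for the short-arc neighbors is exactly the paper's case split between the cyclic orderings (\ref{3}) and (\ref{4}), and your short-arc/long-arc bookkeeping is the paper's case split on where the antipode of $p$ lies, with the chosen sign-matched pair keeping $d_1$ equal to $d_1(p,q)$ while $d_K$ does not decrease. The only difference is that you write out the $d_1$ and arc-length computations (via Lemma \ref{lem:nongeneric}) that the paper leaves implicit.
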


\begin{proof}

Let ${\bf n_p^\pm}$ and ${\bf n_q^\pm}$  be the neighbors of $p$ and $q$. The lattice knot's orientation  induces one of four orderings on these neighbors: \begin{equation} {\bf n_p^-}\to {\bf n_p^+} \to {\bf n_q^+}\to {\bf n_q^-}, \label{3}\end{equation} 
\begin{equation}{\bf n_p^-}\to {\bf n_p^+} \to {\bf n_q^-}\to {\bf n_q^+} ,\label{4}\end{equation}
 \begin{equation*}  {\bf n_p^-}\leftarrow {\bf n_p^+} \leftarrow {\bf n_q^+}\leftarrow{\bf n_q^-},\end{equation*}
  \begin{equation*}  {\bf n_p^-}\leftarrow {\bf n_p^+} \leftarrow {\bf n_q^-}\leftarrow{\bf n_q^+}.\end{equation*}
 Since $\rho_1$ is unaffected by orientation reversal, we need only consider the cases (\ref{3}) and (\ref{4}).

With $p$ and $q$ non-antipodal, there are two subcases to consider: the antipode of $p$ lies on the lattice knot path $p\to {\bf n_p^+}\to q$ or $p\to {\bf n_p^-}\to q$. 

Case 1: The antipode of $p$ lies on $p\to {\bf n_p^+}\to q$ . If (\ref{3}) holds, then \[ \rho_1({\bf n_p^+, n_q^+})> \rho_1(p,q).\] 
If (\ref{4}) holds, then \[\rho_1({\bf n_p^+,n_q^+})=\rho_1(p,q). \]

Case 2: The antipode of $p$ lies on $p\to {\bf n_p^-}\to q$. If (\ref{3}) holds, then \[ \rho_1({\bf n_p^-, n_q^-})>\rho_1(p,q).\] If (\ref{4}) holds, then \[\rho_1({\bf n_p^-,n_q^-})=\rho_1(p,q). \] 

\end{proof}

\begin{proposition}

    If $p$ and $q$ are midpoints of a lattice knot $K$ such that
    \[\rho_1(p, q) > \delta_V(K), \]
    then $p$ and $q$ are antipodal and non-generic. Furthermore, the neighbors of $p$ and $q$ satisfy the ordering of (\ref{3}) for some orientation of $K$.
        \label{prop:bigratio}
\end{proposition}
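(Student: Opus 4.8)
The plan is to prove the contrapositive structure: given midpoints $p,q$ with $\rho_1(p,q) > \delta_V(K)$, we must rule out every configuration except ``antipodal and non-generic with ordering (\ref{3})''. The three preceding lemmas are designed precisely to carve up the case analysis, so the proof is essentially an exhaustion argument. First I would observe that if $p$ and $q$ were generic, then Lemma \ref{lem:generic} produces neighbor vertices ${\bf n_p},{\bf n_q}$ with $\rho_1({\bf n_p},{\bf n_q}) \ge \rho_1(p,q) > \delta_V(K)$; but ${\bf n_p},{\bf n_q} \in V(K)$, so $\delta_V(K) \ge \rho_1({\bf n_p},{\bf n_q})$ by definition of vertex distortion, a contradiction. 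Hence $p$ and $q$ are non-generic.

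Next I would handle antipodality. Suppose $p$ and $q$ are non-generic but non-antipodal. Then Lemma \ref{lem:notgeneric} again yields neighbor vertices with $\rho_1({\bf n_p},{\bf n_q}) \ge \rho_1(p,q) > \delta_V(K)$, contradicting the definition of $\delta_V(K)$ exactly as before. So $p$ and $q$ must be antipodal. This establishes the first assertion of the proposition.

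It remains to pin down the ordering. Since $p,q$ are now known to be non-generic and antipodal, I would revisit the ordering analysis from the proof of Lemma \ref{lem:notgeneric}: the orientation of $K$ induces one of the four cyclic orderings of the neighbors, and after discarding orientation-reversed duplicates we are left with (\ref{3}) and (\ref{4}). I would argue that ordering (\ref{4}) is incompatible with $\rho_1(p,q) > \delta_V(K)$. The idea is that under ordering (\ref{4}), ${\bf n_p^+}$ and ${\bf n_q^-}$ are the ``inner'' neighbors and the antipodality forces a symmetric configuration; one of the two injective arcs from $p$ to $q$ passes through ${\bf n_p^+}$ then ${\bf n_q^-}$, and tracking the $d_K$ and $d_1$ changes (using the non-generic equalities of Lemma \ref{lem:nongeneric}, namely $d_1({\bf n_p^+},{\bf n_q^-}) = d_1(p,q) - 0.5$ paired with $d_K({\bf n_p^+},{\bf n_q^-}) = d_K(p,q) - 0.5$ since both endpoints move inward along the same arc) gives $\rho_1({\bf n_p^+},{\bf n_q^-}) \ge \rho_1(p,q)$, again contradicting the definition of $\delta_V(K)$. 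Therefore ordering (\ref{3}) must hold for one of the two orientations.

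The main obstacle I anticipate is the last step: the ordering (\ref{4}) elimination requires carefully matching the simultaneous behavior of $d_1$ and $d_K$ under antipodality and non-genericity. With antipodal points, the two arcs from $p$ to $q$ have equal length, so moving to an inner neighbor pair shortens both arcs symmetrically; the bookkeeping must confirm that the pair whose $d_1$-distance decreases by $0.5$ is the same pair whose $d_K$-distance decreases by $0.5$, which is where Lemma \ref{lem:nongeneric}'s precise equalities and the ordering (\ref{4}) geometry come together. Once that alignment is verified, the $\frac{x-0.5}{y-0.5} \ge \frac{x}{y}$ inequality (valid since $d_K \ge d_1$) closes the argument. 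I would also note at the outset that $p \ne q$ is automatic here, so $\rho_1$ is being evaluated off the diagonal throughout.
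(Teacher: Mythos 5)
Your first two steps are exactly the paper's argument and are correct: genericity is ruled out by Lemma \ref{lem:generic} and non-antipodality (given non-genericity) by Lemma \ref{lem:notgeneric}, each time because the resulting neighbor vertices would have $\rho_1 \geq \rho_1(p,q) > \delta_V(K)$, contradicting the definition of vertex distortion. The gap is in your elimination of ordering (\ref{4}). You work with the ``inner'' pair ${\bf n_p^+},{\bf n_q^-}$ and claim $d_1({\bf n_p^+},{\bf n_q^-}) = d_1(p,q) - 0.5$ and $d_K({\bf n_p^+},{\bf n_q^-}) = d_K(p,q) - 0.5$, citing Lemma \ref{lem:nongeneric}. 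Both equalities are impossible. Lemma \ref{lem:nongeneric} asserts the opposite about $d_1$: non-genericity means every midpoint-to-neighbor distance equals $d_1(p,q) + 0.5$ (the $-0.5$ alternative is precisely what is excluded), and since $p$ and $q$ share their non-integer coordinate, the $d_1$-distance between any two of their neighbor vertices is $d_1(p,q)$ or $d_1(p,q)+1$ --- never smaller, and never off by a half-integer. Similarly, $d_K$ between two vertices is an integer and $d_K(p,q)$ is an integer (even total length, antipodal points), so a drop of $0.5$ cannot occur; moving both endpoints inward along the same arc shortens it by $1$, giving $d_K({\bf n_p^+},{\bf n_q^-}) = d_K(p,q)-1$ while $d_1({\bf n_p^+},{\bf n_q^-}) = d_1(p,q)+1$. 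Hence $\rho_1({\bf n_p^+},{\bf n_q^-}) < \rho_1(p,q)$ and no contradiction can be extracted from that pair.

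The pair that actually eliminates (\ref{4}) is a same-direction pair, say ${\bf n_p^+},{\bf n_q^+}$ with the labels of Figures \ref{fig:nongeneric} and \ref{fig:nongeneric2}, i.e.\ ${\bf n_p^+}-p = {\bf n_q^+}-q$ along the parallel edges. Under (\ref{4}) with $p,q$ antipodal, the two arcs of $K$ joining ${\bf n_p^+}$ to ${\bf n_q^+}$ (one through the edge of $q$, the other through the edge of $p$) both have length exactly $d_K(p,q)$, and $d_1({\bf n_p^+},{\bf n_q^+}) = d_1(p,q)$ because these neighbors lie on the same side. Therefore $\rho_1({\bf n_p^+},{\bf n_q^+}) = \rho_1(p,q) > \delta_V(K)$, contradicting the definition of $\delta_V(K)$; this is the equality the paper invokes when it says that under (\ref{4}) there exist neighbors with $\rho_1({\bf n_p},{\bf n_q}) = \rho_1(p,q)$. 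So your overall strategy is the right one, but the final step needs the unchanged-$d_K$, unchanged-$d_1$ pair rather than a pair whose distances allegedly both decrease by $0.5$ --- a decrease in $d_1$ is exactly what non-genericity forbids.
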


\begin{figure}[ht]
\centering
\vspace{5mm}

\begin{overpic}[unit=.5mm,scale=1]{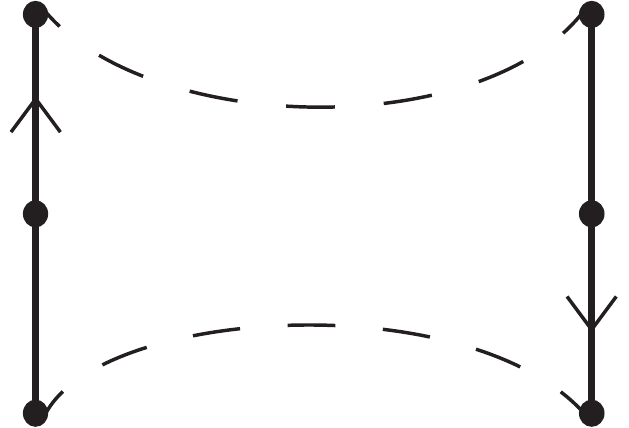} \put(-2,42){$p$} \put(-5,90){${\bf n_p^+}$}  \put(-5,-6){${\bf n_p^-}$} \put(126,42){$q$}  \put(123,90){${\bf n_q^+}$}  \put(123,-6){${\bf n_q^-}$} \put(62,70){$\alpha$}\put(62,11){$\beta$}
\end{overpic}
\vspace{1cm}
\caption{Neighbor ordering induced by (\ref{3}).}
\label{fig:nongeneric2}
\end{figure}

\begin{proof}
If $p$ and $q$ are generic, then Lemma \ref{lem:generic} implies $\delta_V(K)\geq \rho_1(p,q)$. If $p$ and $q$ non-antipodal (and non-generic), then Lemma \ref{lem:notgeneric} gives $\delta_V(K)\geq \rho_1(p,q)$. Therefore, $p$ and $q$ are antipodal and non-generic.

Recall from the proof of Lemma \ref{lem:notgeneric} that the neighbors of $p$ and $q$ satisfy the ordering of (\ref{3}) or (\ref{4}). If (\ref{4}) is satisfied, then there exist neighbors ${\bf n_p}$ and ${\bf n_q}$ such that  $\rho_1({\bf n_p, n_q})=\rho_1(p, q).$ Thus, non-generic, antipodal $p$ and $q$ satisfy $\rho_1(p,q)>\delta_V(K)$ only if (\ref{3}) or its reverse is satisfied. 

\end{proof}


\section{$2^n$-Scaled Lattice Knots and Gromov 1-distortion}
\label{double}

This section introduces $2^n$-scaling of lattice knots. A lattice knot is doubled by multiply the coordinates of each point by 2. We prove in this section that the vertex distortion of a doubled lattice knot equals the vertex distortion of its progressive doublings. Gromov 1-distortion is defined by replacing the Euclidean metric $d$ with the $d_1$ metric. This section shows that the Gromov 1-distortion of a lattice knot equals the vertex distortion of its double. This result is used to prove Theorem \ref{fig:bound} in the next section.

\begin{definition}

The {\it $2^n$-scaling} of a lattice knot $K$ is \[ 2^n K=\{ (2^nx,2^ny,2^nz) \ | \ (x,y,z)\in K\},\]
where $n\in\mathbb{N}$.
\end{definition}

 For $r\in\mathbb{R}^3$, we have that $r\in \mathbb{L}^3$ if and only if at least two coordinates of $r$ are integers. Therefore, the integer scaling of $\mathbb{L}^3$ is contained in $\mathbb{L}^3$. This implies that a $2^n$-scaled lattice knot is a lattice knot.

\begin{lemma}
\label{lem:bigger}
For a lattice knot $K$, if $\delta_V(2K)>\delta_V(K)$ then there exists antipodal, non-generic midpoints $p$ and $q$ of $K$ such that \[\rho_1(p,q)=\delta_V(2K).\] Furthermore, the neighbor vertices of $p$ and $q$ satisfy the ordering of (\ref{3}) illustrated in Figure \ref{fig:nongeneric2}.
\end{lemma}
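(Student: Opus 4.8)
The plan is to relate the vertices and midpoints of $K$ to the vertices of $2K$, and then push the hypothesis $\delta_V(2K)>\delta_V(K)$ through Proposition \ref{prop:bigratio}. First I would observe that the scaling map $\sigma\colon x\mapsto 2x$ carries $K$ to $2K$ as a homeomorphism that multiplies $d_1$ and $d_K$ by exactly $2$, so $\rho_1$ is scale-invariant: $\rho_1^{2K}(\sigma(a),\sigma(b))=\rho_1^K(a,b)$ for all $a,b\in K$. Under $\sigma$, a vertex ${\bf a}\in V(K)$ maps to the vertex $2{\bf a}\in V(2K)$, and a midpoint $m$ of an edge of $K$ maps to a vertex of $2K$ (the new vertex sitting in the middle of the length-$2$ stick). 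Conversely every vertex of $2K$ is either $\sigma$ of a vertex of $K$ or $\sigma$ of a midpoint of $K$; that is, $V(2K)=\sigma\big(V_m(K)\big)$. Hence
\[
\delta_V(2K)=\max_{a,b\in V_m(K)}\rho_1^K(a,b).
\]

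Next I would identify where this maximum is attained. Since $\delta_V(K)=\max_{{\bf a,b}\in V(K)}\rho_1(\mathbf{a},\mathbf{b})$ and we are assuming $\delta_V(2K)>\delta_V(K)$, the maximizing pair $a,b\in V_m(K)$ cannot both be vertices of $K$. If exactly one of them, say $a$, is a midpoint and the other is a vertex ${\bf v}$, then Lemma \ref{lem:nearvertex} produces a neighbor vertex ${\bf n_a}$ of $a$ with $\rho_1({\bf n_a},{\bf v})\geq\rho_1(a,{\bf v})=\delta_V(2K)>\delta_V(K)$, contradicting the definition of $\delta_V(K)$ since ${\bf n_a},{\bf v}\in V(K)$. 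Therefore both $a$ and $b$ are midpoints; call them $p$ and $q$. They satisfy $\rho_1(p,q)=\delta_V(2K)>\delta_V(K)$, so Proposition \ref{prop:bigratio} applies directly and tells us that $p$ and $q$ are antipodal and non-generic, and that their neighbor vertices satisfy the ordering of (\ref{3}) for some orientation of $K$, which is exactly the configuration drawn in Figure \ref{fig:nongeneric2}.

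The only genuinely non-routine point is the claim $V(2K)=\sigma(V_m(K))$ — i.e., that doubling turns precisely the midpoints (and old vertices) into the new integer points, with nothing extra and nothing missing. This needs the remark made just above the lemma that $2^nK$ is again a lattice knot, together with the fact that doubling a stick of length $\ell$ produces a stick of length $2\ell$ whose interior integer points are exactly the images of the $\ell-1$ interior integer points plus the $\ell$ edge-midpoints of the original stick; no new integer point of $2K$ lies off a stick, and the endpoints of sticks behave correctly at corners. I expect this bookkeeping to be the main obstacle, but it is a short combinatorial check rather than a substantive difficulty. Everything after it is an immediate application of the machinery from Sections \ref{section:defns}.
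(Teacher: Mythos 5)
Your proposal is correct and follows essentially the same route as the paper: use scale-invariance of $\rho_1$ to identify $\delta_V(2K)$ with the maximum of $\rho_1$ over $V_m(K)$, rule out the vertex--vertex and vertex--midpoint cases (the latter via Lemma \ref{lem:nearvertex}), and then apply Proposition \ref{prop:bigratio} to the maximizing pair of midpoints. The bookkeeping you flag about $V(2K)$ being the image of $V_m(K)$ under doubling is indeed the only point the paper glosses over, and your check of it is fine.
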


\begin{proof}
Since $\rho_1(2x,2y)=\rho_1(x,y)$, we have that $\delta_V(2K)$ is equal to \[ \max_{p,q\in V_m(K)} \rho_1(p,q).\] The inequality $\delta_V(2K)> \delta_V(K)$ is equivalent to \begin{equation} \max_{p,q\in V_m(K)} \rho_1(p,q) > \max_{{\bf a},{\bf b}\in  V(K)} \rho_1({\bf a},{\bf b}). 
\label{inequal}\end{equation}

The inequality (\ref{inequal}) holds if and only if there is a pair $p',q'\in V_m(K)$ such that $\rho_1(p',q')=\delta_V(2K)>\delta_V(K)$. At least one of $p'$ and $q'$ must be a midpoint of $K$ for this to be true. Lemma \ref{lem:nearvertex} tells us that if only one of $p'$ and $q'$ is a midpoint, then $\rho_1(p',q')\leq\delta_V(K).$ Therefore, both $p'$ and $q'$ must be midpoints for the inequality (\ref{inequal}) to hold. With $p'$ and $q'$ both midpoints of $K$ such that $\rho_1(p',q')>\delta_V(K)$, Proposition \ref{prop:bigratio} tells us that $p'$ and $q'$ are antipodal and non-generic. The proposition further implies that the neighbor vertices of $p'$ and $q'$ satisfy the ordering of (\ref{3}). This is shown in Figure \ref{fig:nongeneric2}.

\end{proof}

\begin{lemma}
\label{lem:lim2}
For any lattice knot $K$ and positive integer $n$,  \[\delta_V(2^{n+1}K)=\delta_V(2^nK).\] 
\end{lemma}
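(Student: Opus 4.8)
The plan is to prove the two inequalities separately. The bound $\delta_V(2^{n+1}K)\ge\delta_V(2^nK)$ is routine: multiplication by $2$ injects $V(2^nK)$ into $V(2^{n+1}K)$ and scales $d_K$ and $d_1$ by the same factor, so it preserves every ratio $\rho_1$, and $\delta_V(2^{n+1}K)$ is a maximum of $\rho_1$ over a larger set. For the reverse inequality I will argue by contradiction. Suppose $\delta_V(2^{n+1}K)>\delta_V(2^nK)$ and set $L:=2^nK$, so that $2^{n+1}K=2L$ and, since $n\ge1$, $L=2K'$ with $K':=2^{n-1}K$ a lattice knot. By Lemma~\ref{lem:bigger} applied to $L$ there are antipodal non-generic midpoints $p,q$ of $L$ with $\rho_1(p,q)=\delta_V(2L)$, and by Lemma~\ref{lem:nongeneric} I may assume (after permuting coordinates) that $p=(c+\tfrac12,p_2,p_3)$ and $q=(c+\tfrac12,q_2,q_3)$ lie on parallel $x$-edges with $c\in\mathbb Z$; write $A_0=(c,p_2,p_3)$, $A_1=(c+1,p_2,p_3)$ for the neighbours of $p$ and $B_0=(c,q_2,q_3)$, $B_1=(c+1,q_2,q_3)$ for those of $q$.

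The crucial consequence of $L=2K'$ is that every $x$-stick of $L$ is twice an $x$-stick of $K'$, hence has even $y$- and $z$-coordinates; so $p_2,p_3,q_2,q_3$ are even, $D:=d_1(p,q)=|p_2-q_2|+|p_3-q_3|$ is a positive even integer, and the endpoint among $A_0,A_1$ (respectively $B_0,B_1$) with even $x$-coordinate — the same index $\varepsilon\in\{0,1\}$ for both, since $c$ is common — lies in $2\mathbb Z^3\cap L=2V(K')$, say it is $2\mathbf a$ (respectively $2\mathbf c$) with $\mathbf a,\mathbf c\in V(K')$. Deleting the interiors of the edges of $p$ and $q$ from the circle $L$ leaves two arcs, of lengths $\ell_1,\ell_2$; the two paths from $p$ to $q$ through them have lengths $\ell_1+1$ and $\ell_2+1$, and antipodality $d_L(p,q)=\tfrac12(\ell_1+\ell_2+2)$ forces $\ell_1=\ell_2=:m$, so the total length of $L$ is $2m+2$ and $d_L(p,q)=m+1$. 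Since the length of $L$ equals twice that of $K'$, which is even by Lemma~\ref{lem:even}, $m$ is odd.

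Now I split on how the two arcs match the four endpoints. In the \emph{parallel} case (arcs joining $A_0$ to $B_0$ and $A_1$ to $B_1$, as in Figure~\ref{fig:nongeneric2}), the vertices $2\mathbf a$ and $2\mathbf c$ are joined by an arc of length $m$, the complementary arc between them having length $m+2$, so $d_L(2\mathbf a,2\mathbf c)=m$ and hence $d_{K'}(\mathbf a,\mathbf c)=m/2$; but the distance in $K'$ between two vertices is an integer (each arc between them is a concatenation of unit edges), forcing $m$ even — contradicting that $m$ is odd. In the \emph{crossed} case (arcs joining $A_0$ to $B_1$ and $A_1$ to $B_0$), both arcs between $A_0$ and $B_0$ have length $m+1$ (each is one of the length-$m$ arcs together with one unit edge of $p$ or $q$), so $d_L(A_0,B_0)=m+1$, while $d_1(A_0,B_0)=D$ since $A_0$ and $B_0$ share the $x$-coordinate $c$ (and $A_0\ne B_0$ because $D>0$); thus $\rho_1(A_0,B_0)=(m+1)/D=d_L(p,q)/d_1(p,q)=\rho_1(p,q)=\delta_V(2L)$, and since $A_0,B_0\in V(L)$ this gives $\delta_V(L)\ge\delta_V(2L)$, against the assumption. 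Either way there is a contradiction, so $\delta_V(2^{n+1}K)\le\delta_V(2^nK)$, which with the first inequality gives the claim.

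The delicate step is this dichotomy, and arranging for the two halves to land correctly: the parity obstruction bites only in the parallel case, where it needs both that $L$ is a double (so that $\mathbf a,\mathbf c$ are genuine vertices of $K'$) and that the length of $K'$ is even, whereas the crossed case is disposed of by exhibiting an honest vertex pair realizing the extremal ratio and uses neither. One should also check the elementary arc-length bookkeeping in each case and that $A_0\ne B_0$ in the crossed case (so that $\rho_1(A_0,B_0)$ is really the ratio $d_L/d_1$), but these are immediate once the configuration is pinned down by Lemmas~\ref{lem:bigger} and~\ref{lem:nongeneric}.
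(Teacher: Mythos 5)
Your proof is correct, and its skeleton is the paper's: assume $\delta_V(2L)>\delta_V(L)$ for $L=2^nK$, invoke Lemma~\ref{lem:bigger} to get antipodal, non-generic midpoints realizing $\delta_V(2L)$, use Lemma~\ref{lem:nongeneric} for the parallel-edge structure, and derive a parity contradiction from the fact that the two arcs have common odd length $m$ (odd because the length of $L$ is divisible by $4$, via Lemma~\ref{lem:even} applied to $2^{n-1}K$). Where you genuinely diverge is the endgame. The paper disposes of the ``crossed'' matching beforehand (it is ordering~(\ref{4}), excluded by the ordering-(\ref{3}) conclusion of Lemma~\ref{lem:bigger}/Proposition~\ref{prop:bigratio}), and then, in the parallel case, closes the arc $\alpha$ of Figure~\ref{fig:nongeneric2} with a shortest auxiliary lattice path, uses that closed lattice loops have even length, and finishes with a case analysis on which neighbor vertices have all-even coordinates. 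You instead handle the crossed matching directly by exhibiting the vertex pair $A_0,B_0$ with $\rho_1(A_0,B_0)=\rho_1(p,q)$ (re-deriving the ordering-(\ref{4}) computation rather than citing it), and in the parallel case you avoid the auxiliary path entirely: since $L=2(2^{n-1}K)$, the two edge-endpoints with even $x$-coordinate are doubles of vertices of $2^{n-1}K$, and the arc of length $m$ joining them descends to an integer-length arc, forcing $m$ even. Your descent argument buys a more self-contained and arguably cleaner contradiction (it sidesteps the paper's implicit appeal to even length of arbitrary closed lattice loops and the figure-based final step), at the cost of redoing a case the paper's earlier lemmas already cover; the paper's version is shorter given that machinery.
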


\begin{proof}
We will show that \[\delta_V(4K)=\delta_V(2K).\] The lemma follows from a recursion of this fact. 

Assume that $\delta_V(4K)> \delta_V(2K).$  Lemma \ref{lem:bigger} implies that there exist antipodal, non-generic midpoints $p$ and $q$ of $2K$ such that $\rho_1(p,q)=\delta_V(4K)>\delta_V(2K)$. The neighbor vertices of $p$ and $q$ satisfy the ordering of (\ref{3}) illustrated in Figure \ref{fig:nongeneric2}.

Let $\alpha$ and $\beta$ be the embedded paths in $K$ connecting ${\bf n_p^+}$ to ${\bf n_q^+}$ and ${\bf n_p^-}$ to ${\bf n_q^-}$. These are abstractly drawn in Figure \ref{fig:nongeneric2}. The paths $\alpha$ and $\beta$ have equal length $\ell\in\mathbb{N}$ since $p$ and $q$ are antipodal. The total length of the knot is $2\ell+2$. Lemma \ref{lem:even} tells us that lattice knots have even length implying that four divides the arc length of $2K$. Therefore, $\ell$ must be odd.

Connect the endpoints of $\alpha$ with a path $L$ in $\mathbb{L}^3$ with arc length equal to the $d_1$-distance between the endpoints. Let $|L|$ denote this arc length. The closed loop $L\cup \alpha$ has even length $|L|+\ell$ if and only if $|L|$ is odd. Therefore, $|L|$ must be odd.
  
  If   ${\bf n_p^+}$ and ${\bf n_q^+}$ have even coordinates, then $|L|$ is even, forming a contradiction. Following the orientation of the knot, every other vertex of $2K$ has even coordinates since it is the double a vertex of $K$. Therefore, if neither ${\bf n_p^+}$ nor ${\bf n_q^+}$ has all even coordinates, then ${\bf n_p^-}$ and ${\bf n_q^-}$ have even coordinates. With a minimal length path in lattice chosen to connect $\beta$'s endpoints, we can form the same contradiction as before that implies this path must have both odd and even length. In conclusion, one of ${\bf n_p^+}$ and ${\bf n_q^+}$ has all even coordinates and one does not.
  
Without loss of generality, assume that ${\bf n_p^+}$ is the endpoint with not all even coordinates. Then, ${\bf n_p^-}$ and ${\bf n_q^+}$ have even coordinates. Since Figure \ref{fig:nongeneric2} shows that one coordinate of ${\bf n_p^-}-{\bf n_q^+}$ is 1, we have a contradiction.

\end{proof}

Let $K$ be a lattice knot. Since $\rho_1$ is continuous on $\mathbb{L}^3$, $\rho_1$ is continuous on $K$ away from the diagonal $\{(x,x): x\in K\}$. On the diagonal, $\rho_1$ is defined to be one. For sequences $x_n\to x$ and $x'_n\to x$, we have $\rho_1(x_n,x'_n)\to1$ since  $d_K(y,z)= d_1(y,z)$ for all $y$ and $z$ in a $d_1$-neighborhood of radius less than one in $\mathbb{L}^3$. Thus, $\rho_1$ is continuous on lattice knots.

\begin{definition}
    The {\it Gromov $1$-distortion} of a lattice knot $K$ is 
    \[
        \delta^{(1)}(K) := \max_{a,b\in K} \rho_1({ a,b}).
    \]
\end{definition}

\noindent With $\rho_1$ continuous on $\mathbb{L}^3$ and $K$ compact, $\delta^{(1)}$ is well-defined as a maximum instead of a supremum.

Let $\mathcal{D}$ denote the set of dyadic rationals in $\mathbb{R}$, and define the set $\mathcal{D}_L$ as
\[
    \mathcal{D}_{{L}}: = (\mathcal{D} \times \mathbb{Z} \times \mathbb{Z}) \cup (\mathbb{Z} \times \mathcal{D} \times \mathbb{Z}) \cup (\mathbb{Z} \times \mathbb{Z} \times \mathcal{D}).
\]  The density of $\mathcal{D}$ in $\mathbb{R}$ implies that $\mathcal{D}_L$ is dense in $\mathbb{L}^3$. Furthermore, $\mathcal{D}_L \cap K$ is a dense subset of $K$.

\begin{proposition}\label{lem:3}
   For a lattice knot $K$, 

    \[
       \delta^{(1)}(K) = \delta_V(2K).
    \]
\end{proposition}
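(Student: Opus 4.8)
The plan is to prove the two inequalities $\delta^{(1)}(K) \ge \delta_V(2K)$ and $\delta^{(1)}(K) \le \delta_V(2K)$ separately. The first is the easy direction: every vertex and midpoint of $K$ is a point of $K$, so $\rho_1$ evaluated at any pair of such points is bounded above by $\delta^{(1)}(K)$; since $\rho_1(2x,2y)=\rho_1(x,y)$, we get $\delta_V(2K) = \max_{p,q \in V_m(K)} \rho_1(p,q) \le \delta^{(1)}(K)$, using the observation already recorded in the proof of Lemma~\ref{lem:bigger}. The content is in the reverse inequality: I must show that the maximum of $\rho_1$ over \emph{all} points of $K$ is actually attained (or at least matched) by a pair of vertices-or-midpoints of $K$.

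For $\delta^{(1)}(K) \le \delta_V(2K)$, I would first use compactness of $K$ and continuity of $\rho_1$ on $K$ (established in the paragraph preceding the Gromov $1$-distortion definition) to pick $a, b \in K$ realizing $\delta^{(1)}(K) = \rho_1(a,b)$. The strategy is then a ``push to half-integer coordinates'' argument: I want to perturb $a$ and $b$ along $K$ without decreasing $\rho_1$ until both land in $V_m(K)$, i.e.\ until both lie on the half-integer lattice $\tfrac12 \mathbb{Z}^3 \cap K$ which (after scaling by $2$) is exactly $V(2K)$. The key technical tool is a one-variable monotonicity lemma analogous to Lemma~\ref{lem:nearvertex}: holding $b$ fixed, as $a$ varies along a single edge of $K$, the numerator $d_K(a,b)$ is piecewise linear with slope $\pm 1$ and the denominator $d_1(a,b)$ is piecewise linear with slope in $\{-1,0,+1\}$; on any sub-interval where both are linear, the ratio $\rho_1(a,b)$ is monotone, hence maximized at an endpoint of that sub-interval. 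The breakpoints of $d_1(a,\cdot)$-as-a-function-of-$a$ occur where some coordinate difference $a_i - b_i$ vanishes — and since $b$'s coordinates and the endpoints of $a$'s edge are all in $\tfrac12\mathbb{Z}$, those breakpoints themselves lie in $\tfrac12\mathbb{Z}^3 \cap K \subseteq V_m(K)$; the breakpoint of $d_K(a,\cdot)$ is the antipode of $b$, which (by Lemma~\ref{lem:even}, the even-length observation, applied to $2K$ so that $4 \mid$ length — or more carefully, noting the antipode of a point of $V_m(K)$ is again in $V_m(K)$) is also a half-integer point. So sliding $a$ to whichever of these endpoints does not decrease $\rho_1$ moves $a$ into $V_m(K)$; repeating with $b$ fixed at its new location and then swapping roles finishes it.

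The main obstacle I anticipate is bookkeeping around the breakpoints: I must confirm that \emph{all} breakpoints of the relevant piecewise-linear functions, restricted to $a$ ranging over one edge of $K$, lie in $\tfrac12\mathbb{Z}$, so that pushing $a$ to an endpoint of the maximal linear sub-interval genuinely lands in $V_m(K)$ rather than at some other dyadic point. For the $d_1$ side this needs $b \in \tfrac12\mathbb{Z}^3$ (guaranteed after the first round, or handled by first pushing $b$); for the $d_K$ side it needs the antipode of $b$, measured from $a$'s edge, to be a half-integer point, which is where the parity/even-length lemma enters. A secondary subtlety is the degenerate case $a = b$ where $\rho_1 = 1$; but then $\delta^{(1)}(K) = 1$ forces $K$ unknotted and $\delta_V(2K) = 1$ as well, so the inequality is trivial. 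A cleaner packaging is to observe that $\rho_1$ on $K \times K$ is piecewise-fractional-linear with respect to the cell structure on $K$ induced by the half-integer lattice, so its maximum is attained at a vertex of that cell structure, i.e.\ at a pair in $V_m(K) \times V_m(K)$; I would present the monotonicity argument above as the justification for this, then conclude $\delta^{(1)}(K) = \max_{p,q \in V_m(K)} \rho_1(p,q) = \delta_V(2K)$.
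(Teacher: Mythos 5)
Your easy direction is fine, but the reverse inequality is where the content lies, and your argument for it has a genuine gap at exactly the point you flag. Note first that the paper proceeds quite differently: it takes maximizers $a',b'$ of $\rho_1$, approximates them by dyadic lattice points $a_n,b_n$, scales by $2^{m_n}$ so that these become vertices of $2^{m_n}K$, and then uses the doubling stability $\delta_V(2^{n+1}K)=\delta_V(2^nK)$ (Lemma~\ref{lem:lim2}) together with $\rho_1(2x,2y)=\rho_1(x,y)$ and continuity of $\rho_1$ to force $\rho_1(a',b')\le\delta_V(2K)$; all the combinatorial difficulty is quarantined in Lemma~\ref{lem:lim2}, which you never invoke.

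The gap in your direct argument is the half-integrality of the breakpoints. Sliding $a$ along its edge with $b$ fixed, the maximum of $\rho_1(\cdot,b)$ is attained at an edge endpoint, at the point where a coordinate of $a$ matches the corresponding coordinate of $b$, or at the antipode of $b$. If $b\notin V_m(K)$, the latter two points need not be half-integer points, so the slide does not land $a$ in $V_m(K)$; your proposed remedy, ``first pushing $b$,'' is circular, since pushing $b$ into $V_m(K)$ requires the same half-integrality of $a$'s coordinates and of the antipode of $a$. In particular the process can stall at a non-half-integer antipodal pair (already optimal for each one-variable slide), and nothing in your write-up excludes that such a stalled pair has $\rho_1$ strictly larger than every pair in $V_m(K)\times V_m(K)$ --- which is precisely what must be proved. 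The same issue resurfaces in your ``cleaner packaging'': it is not automatic that every vertex of the linearity subdivision of a product of half-edge cells is a corner, because the antipodal ($d_K$-breakpoint) diagonal and a coordinate-matching ($d_1$-breakpoint) diagonal could a priori cross at the center of the cell product, i.e.\ at a quarter-point pair. Ruling this out needs a parity argument: by Lemma~\ref{lem:even} the length is even, so antipodal corner points are simultaneously vertices or simultaneously midpoints of parallel edges, hence along the antipodal locus the relevant coordinate difference is an integer at each corner and changes with slope $0$ or $\pm2$, so it cannot vanish at a quarter point. This is an argument of the same flavor as the paper's Lemmas~\ref{lem:bigger} and~\ref{lem:lim2}, and it is missing from your proposal; with it supplied (or by simply invoking Lemma~\ref{lem:lim2} as the paper does), your approach would go through.
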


\begin{proof}

For a lattice knot $K$, $\delta_V(2K)$ is equivalent to \[ \max_{p,q\in V_m(K)} \rho_1(p,q).\] Since the Gromov 1-distortion of $K$ is equal to \[ \max_{a,b\in K} \rho_1(a,b) ,\] we have $\delta^{(1)}(K)\geq \delta_V(2K).$ Assume that $\delta^{(1)}(K) > \delta_V(2K)$, and let $a',b'\in K$ be the points realizing the Gromov 1-distortion maximum $\delta^{(1)}(K)=\rho_1(a',b')$. The points $a'$ and $b'$ cannot both be vertices of $K$ else $\delta^{(1)}(K) = \delta_V(2K).$

Since $\mathcal{D}_L\cap K$ is a dense subset of $K$, there exist sequences $a_n$ and $b_n$ in $ \mathcal{D}_L \cap K$ such that $a_n\to a'$ and $b_n\to b'$. 
For each $n$, let $m_n$ be the smallest non-negative integer such that \[ 2^{m_n}a_n,2^{m_n}b_n\in V(2^{m_n} K).\]   Then, $\delta_V(2^{m_n}K)=\delta_V(2K)$  by Lemma \ref{lem:lim2}.

The continuity of $\rho_1$ implies \begin{align*} \rho_1(a',b')&=\lim\limits_{n\to\infty} \rho_1(a_n,b_n)\\ &=\lim\limits_{n\to\infty}\rho_1(2^{m_n}a_n, 2^{m_n} b_n)\\ &\leq \lim\limits_{n\to\infty} \delta_V(2^{m_n}K)\\  &=\delta_V(2K),\end{align*}
contradicting the assumption $\rho_1(a',b')=\delta^{(1)}(K)>\delta_V(2K).$ The equality $\lim\limits_{n\to\infty} \delta_V(2^{m_n}K)=\delta_V(2K)$ holds since $ \delta_V(2^{m_n}K)=\delta_V(2K)$ for any $m_n>0$.

\end{proof}

This section concludes with two Gromov 1-distortion inequalities that will be used in the proof of Theorem \ref{fig:bound}.

\begin{lemma}
    \label{lem:2}
    For any lattice knot $K$,
    \[
 \delta^{(1)}(K)  \geq  \frac{1}{\sqrt{3}} \delta(K).
    \]
\end{lemma}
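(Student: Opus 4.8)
The plan is to compare the two distortion ratios $\rho_1$ and $\rho_2$ pointwise and then pass to the maximum. For any two points $x,y\in\mathbb{R}^3$, the Cauchy--Schwarz inequality (or the standard comparison of $\ell^1$ and $\ell^2$ norms in $\mathbb{R}^3$) gives $d_1(x,y)\le\sqrt{3}\,d(x,y)$. Since $d_K(x,y)$ is the common numerator in $\rho_1$ and $\rho_2$, this yields, for every $x\neq y$ on a lattice knot $K$,
\[
\rho_1(x,y)=\frac{d_K(x,y)}{d_1(x,y)}\ \geq\ \frac{d_K(x,y)}{\sqrt{3}\,d(x,y)}=\frac{1}{\sqrt3}\,\rho_2(x,y),
\]
and the inequality also holds trivially when $x=y$, where both sides are $1/\sqrt3\le 1$ versus $1$.

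Next I would take the supremum over pairs. Taking $x=y$ aside, $\sup_{x\neq y}\rho_2(x,y)=\delta(K)$ by the definition of Gromov distortion, so $\sup_{x\neq y}\rho_1(x,y)\ge \tfrac{1}{\sqrt3}\delta(K)$. It remains to identify $\sup_{x\neq y}\rho_1(x,y)$ with $\delta^{(1)}(K)=\max_{a,b\in K}\rho_1(a,b)$; this is exactly the content of the paragraph preceding the definition of Gromov $1$-distortion, where $\rho_1$ is shown to be continuous on the compact set $K$ (it extends continuously by the value $1$ on the diagonal, and since $\delta(K)\ge\pi/2>\sqrt3$, the diagonal value $1/\sqrt3$ does not affect the relevant supremum). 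Hence $\delta^{(1)}(K)=\sup_{x\ne y}\rho_1(x,y)\ge \tfrac{1}{\sqrt3}\delta(K)$, which is the claim.

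I do not expect a genuine obstacle here: the only things to be careful about are (i) handling the diagonal $x=y$ so that the stated maximum is legitimately a maximum and not just a supremum — this is already justified by the continuity remarks in the excerpt — and (ii) making sure the norm comparison constant is the correct one ($\sqrt{3}$, not $3$), which follows since for $v\in\mathbb{R}^3$ one has $\|v\|_1\le\sqrt{3}\,\|v\|_2$ with equality when all coordinates have equal absolute value. So the argument is essentially a one-line norm inequality promoted to the supremum, with the continuity of $\rho_1$ doing the bookkeeping that lets us write $\delta^{(1)}$ and $\delta$ as genuine maxima/suprema.
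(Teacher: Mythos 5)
Your argument is correct and is essentially the paper's proof: the paper's entire justification is the norm comparison $\sqrt{3}\,d \geq d_1$, which gives $\rho_1 \geq \rho_2/\sqrt{3}$ pointwise and hence the inequality after passing to the supremum. Your extra bookkeeping about the diagonal and the continuity of $\rho_1$ only makes explicit what the paper already established before defining $\delta^{(1)}$.
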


\begin{proof}

This follows from the inequality $ \sqrt3 \ d \geq d_1 .$

\end{proof}

\begin{corollary}
\label{lem:1distort}
For any lattice knot $K$,  we have \[\delta^{(1)}(K)\ge  \frac{5\pi}{3\sqrt3}.\]

\end{corollary}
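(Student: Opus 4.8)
The plan is to obtain this as an immediate consequence of the Denne--Sullivan bound (Theorem \ref{thm:den}) together with the metric comparison just proved in Lemma \ref{lem:2}; here $K$ is understood to be non-trivial, which is the only case in which the bound is meaningful and the only case in which it is invoked (in the proof of Theorem \ref{fig:bound}). First I would observe that a lattice knot is a finite union of line segments, hence a tame knot, so Theorem \ref{thm:den} applies and gives $\delta(K) \geq 5\pi/3$.

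Next I would simply chain this with Lemma \ref{lem:2}, which asserts $\delta^{(1)}(K) \geq \tfrac{1}{\sqrt{3}}\,\delta(K)$ for every lattice knot $K$. Combining the two inequalities,
\[
\delta^{(1)}(K) \;\geq\; \frac{1}{\sqrt{3}}\,\delta(K) \;\geq\; \frac{1}{\sqrt{3}}\cdot\frac{5\pi}{3} \;=\; \frac{5\pi}{3\sqrt{3}},
\]
which is exactly the claimed bound.

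There is essentially no obstacle to overcome: the corollary is a one-line composition of two results that are either cited (Theorem \ref{thm:den}) or established immediately beforehand (Lemma \ref{lem:2}). The only point worth stating explicitly is the applicability of Theorem \ref{thm:den}, namely that lattice knots are tame, which is immediate from their definition as finite unions of sticks. The quantitative heart of the argument — that $1/\sqrt{3}$ is the correct constant relating $\delta^{(1)}$ and $\delta$ — is contained in Lemma \ref{lem:2} via the pointwise inequality $\sqrt{3}\,d \geq d_1$ on $\mathbb{R}^3$, so no further estimation is required.
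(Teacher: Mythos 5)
Your proof is correct and is exactly the paper's argument: the corollary is obtained by chaining Theorem \ref{thm:den} with Lemma \ref{lem:2}. Your explicit remark that the bound requires $K$ non-trivial (as in fact it is only invoked for non-trivial knots in the proof of Theorem \ref{fig:bound}) is a fair point of precision that the paper's statement glosses over, but it does not change the argument.
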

\begin{proof}
This is a consequence of  Lemma \ref{lem:2} and Theorem \ref{thm:den}.  
\end{proof}


\section{Proof of Theorem \ref{fig:bound}}
\label{section:proof}

\noindent {\bf Theorem \ref{fig:bound}}
 Let $K$ be a non-trivial lattice knot. Then,   \[
        \delta_V(K) \geq \frac{5\pi}{3\sqrt3}-1.
    \]

\begin{proof}

If $\delta_V(2K)=\delta_V(K)$, then  Proposition \ref{lem:3} and Corollary \ref{lem:1distort} give

    \[
          \delta_V(K) = \delta_V(2K) = \delta^{(1)}(K) \geq \frac{5\pi}{3\sqrt3}.    
    \]

We finish the proof with the case $\delta_V(2K)>\delta_V(K).$ Lemma \ref{lem:bigger} tells us that there exists antipodal, non-generic midpoints $p$ and $q$ of $K$ such that their neighbors satisfy the ordering of (\ref{3}) and  \[ \rho_1(p,q)=\delta_V(2K)>\delta_V(K).\] Such midpoints are illustrated in Figure \ref{fig:nongeneric2} where we see that there exist neighbor vertices ${\bf n_p}$ and ${\bf n_q}$ such that  \[\rho_1({\bf n_{p}, n_{q}})=\frac{d_K(p,q)-1}{d_1(p,q)}. \]

The initial argument of this proof tells us that \[ \rho_1(p,q)=\delta_V(2K)=\delta^{(1)}(K)\geq\frac{5\pi}{3\sqrt3}.\]
Thus,

\[   \delta_V(K)\geq \rho_1({\bf n_{p},n_{q} }) = \frac{d_K(p, q) - 1}{d_1(p, q)} \geq \delta_V(2K)-1\geq \frac{5\pi}{3\sqrt3}-1.\]

\end{proof}

\begin{corollary}

If $\delta_V[K]=1$, then $[K]$ is trivial

\end{corollary}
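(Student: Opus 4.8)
The plan is to prove this corollary by contraposition: assume $[K]$ is non-trivial and show $\delta_V[K] > 1$, in fact that $\delta_V[K] \geq \frac{5\pi}{3\sqrt3} - 1 > 1$. The key observation is that $\delta_V[K]$ is an infimum over all lattice conformations $K$ representing the knot class $[K]$, so I must produce a bound that is uniform over all such conformations, not just a bound for a single conformation.

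First I would invoke Theorem \ref{fig:bound}: for \emph{every} lattice knot $K$ in the class $[K]$ (each of which is non-trivial since $[K]$ is non-trivial), we have $\delta_V(K) \geq \frac{5\pi}{3\sqrt3} - 1$. Since this lower bound does not depend on the choice of conformation, taking the infimum over all conformations $K \in [K]$ preserves the inequality, giving $\delta_V[K] = \inf_{K \in [K]} \delta_V(K) \geq \frac{5\pi}{3\sqrt3} - 1 \approx 2.02 > 1$. In particular $\delta_V[K] \neq 1$. This is the contrapositive of the statement ``$\delta_V[K] = 1$ implies $[K]$ trivial,'' so the corollary follows.

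There is essentially no obstacle here; the work has all been done in establishing Theorem \ref{fig:bound}. The only point requiring the slightest care is the logical structure: one must note that every conformation of a non-trivial knot class is itself a non-trivial lattice knot, so that Theorem \ref{fig:bound} applies to each term in the infimum, and that a uniform lower bound on a set of real numbers is a lower bound for its infimum. I would also remark, for completeness, that combined with property (iv) from \cite{Campisi_2021} (namely $\delta_V[U] = 1$ for the unknot $U$), this settles the conjecture that $\delta_V[K] = 1$ if and only if $[K]$ is trivial.

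\begin{proof}
Suppose $[K]$ is non-trivial. Every lattice knot conformation $K \in [K]$ is then a non-trivial lattice knot, so Theorem \ref{fig:bound} gives $\delta_V(K) \geq \frac{5\pi}{3\sqrt3} - 1$. Since this bound is independent of the conformation, it is a lower bound for the infimum:
\[
\delta_V[K] = \inf_{K \in [K]} \delta_V(K) \geq \frac{5\pi}{3\sqrt3} - 1 \approx 2.02 > 1.
\]
Hence $\delta_V[K] \neq 1$. Equivalently, if $\delta_V[K] = 1$, then $[K]$ is trivial. Together with the fact that $\delta_V[U] = 1$ for the unknot $U$ \cite{Campisi_2021}, this shows $\delta_V[K] = 1$ if and only if $[K]$ is trivial.
\end{proof}
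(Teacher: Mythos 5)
Your proof is correct and uses the same essential ingredient as the paper, namely applying Theorem \ref{fig:bound} to every conformation of a non-trivial class; the paper phrases it as a contradiction via a sequence $K_n$ with $\delta_V(K_n)\to 1$, while you pass the uniform bound directly through the infimum, which is the same argument in contrapositive form (and records the slightly stronger conclusion $\delta_V[K]\geq \frac{5\pi}{3\sqrt3}-1$ for non-trivial classes).
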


\begin{proof}
Suppose that $[K]$ is not trivial and $\delta_V[K]=1$. Then, there exists a sequence of conformations $K_n$ such that $\delta_V(K_n)\to 1$ as $n\to \infty$. Some term $K_m$ of this sequence satisfies $\delta_V(K_m)<\frac{5\pi}{3\sqrt3}-1$. Theorem \ref{fig:bound} implies that $K_m$ is unknotted, contradicting the non-triviality of $[K]$. 

\end{proof}

Therefore, $\delta_V[K]=1$ if and only if $[K]$ is trivial.

\section{Proof of Theorem \ref{prop:bounding1}}
\label{section:misc}

\noindent {\bf Theorem \ref{prop:bounding1}} For any lattice knot $K$, 
\[ \delta(K)\geq \delta_V(K)\geq \delta(K)/\sqrt3-1.\]

\begin{proof}
Since $d_1\geq d$, we have $\delta(K)\geq \delta_V(K)$.  Proposition \ref{lem:3} and Lemma \ref{lem:2} imply that \[ \delta_V(2K) = \delta^{(1)}(K) \geq \delta(K)/\sqrt3.\]
If $\delta_V(2K)=\delta_V(K)$, then the proof is complete. If $\delta_V(2K)>\delta_V(K),$ then the last line of Theorem \ref{fig:bound}'s proof shows that \[\delta_V(K)\geq \delta(2K)-1.\]     
\noindent Thus,
\[\delta_V(K)+1\geq \delta(K)/\sqrt3.\]

\end{proof}

The preceding theorem implies that a bound on Gromov distortion carries to vertex distortion. This allows for a vertex distortion port of Pardon's result on Gromov distortion.

\begin{theorem}[Pardon '11 \cite{Pardon_2011}]
For the $(p,q)$-torus knot $T_{p,q}$, \[ \delta[T_{p,q}]\geq \frac{\min(p,q)}{160}.\]
\end{theorem}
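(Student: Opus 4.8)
The plan is to prove the statement as Pardon did, by converting the topological complexity of $T_{p,q}$ into a forced geometric ``bunching'' of strands that no representative of the class can avoid. Fix a rectifiable representative $K$ of $[T_{p,q}]$, normalized to total length $L$, and write $\delta=\delta(K)$; the goal is a lower bound on $\delta$ in terms of $\min(p,q)$, which then passes to the infimum over the class. The first ingredient I would establish is a \emph{concentration lemma}: if $a,b\in K$ lie in a common Euclidean ball of radius $R$, then $d_K(a,b)\le \delta\,d(a,b)\le 2\delta R$, so whenever $R<L/(4\delta)$ the entire set $K\cap \overline{B}(x,R)$ is contained in a single sub-arc of $K$ of length at most $2\delta R$. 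Thus, assuming small distortion, $K$ may visit any small ball only during a short window of its arc-length parameter.

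The second ingredient is a topological lower bound on how many times $K$ must pass through a controlled region. Here I would use that the bridge number of the torus knot is $\min(p,q)$, equivalently that for any representative and any linear height function the restriction has at least $\min(p,q)$ local maxima, so $K$ turns around at least $\min(p,q)$ times. Coupling this with an integral-geometric estimate is the heart of the argument: sweeping $\mathbb{R}^3$ by round spheres $S(c,t)$ and applying the coarea formula to $f(y)=|y-c|$ gives $\int_0^\infty \#\bigl(K\cap S(c,t)\bigr)\,dt=\int_K|\nabla_K f|\,ds\le L$, trading total length against crossing numbers. The topology forces these crossing numbers to be large on a substantial range of radii, while the concentration lemma forces any single small region visited $N$ times to spend arc length of order $N$ there. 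Balancing the two, I would extract two points of $K$ that are Euclidean-close but separated in arc length by a definite fraction of $L$, yielding a distortion ratio of order $\min(p,q)$.

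The step I expect to be the genuine obstacle is \emph{localizing} the complexity: the $\min(p,q)$ turn-arounds guaranteed by the bridge number can a priori occur at wildly different positions and scales, so no single sphere need meet $K$ in $2\min(p,q)$ points and no single small ball need be visited $\min(p,q)$ times. Overcoming this requires an averaging or min--max argument over the center $c$ and radius $t$ (Pardon's essential innovation), showing that for even an adversarial embedding some scale and location must carry a definite fraction of the winding. This is exactly where the explicit, and deliberately lossy, constant $1/160$ originates, absorbing the overlaps from the covering step and the slack in passing between crossing counts and arc-length separation. Once the localization is secured, the final inequality $\delta\ge \min(p,q)/160$ follows from the balancing above and holds uniformly over the class, giving $\delta[T_{p,q}]\ge \min(p,q)/160$.
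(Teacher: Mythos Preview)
The paper does not supply a proof of this statement: it is imported wholesale as a theorem of Pardon \cite{Pardon_2011} and then combined with Theorem~\ref{prop:bounding1} to obtain Corollary~\ref{cor:pardon}. There is nothing in the paper to compare your sketch against.

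As a sketch of Pardon's original argument your outline is in the right spirit, but the topological input you select does not do the work you need. Bridge number guarantees at least $\min(p,q)$ local maxima for a \emph{linear} height function, yet your coarea step uses $f(y)=|y-c|$, whose level sets are round spheres; bridge number gives no lower bound on $\#\bigl(K\cap S(c,t)\bigr)$ for a fixed center and radius. What Pardon actually uses is that $T_{p,q}$ sits on an embedded torus in such a way that any embedded $2$-sphere meeting $K$ in fewer than $\min(p,q)$ points must be inessential (bounding a ball missing the interesting part of the knot). This is a $3$-manifold topology statement about essential intersections, not a Morse-theoretic one, and it is exactly what forces the crossing number with concentric spheres to stay large over a definite range of radii for \emph{every} center $c$. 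Without that input the localization step you correctly flag as the crux has no leverage: the $\min(p,q)$ turn-arounds coming from bridge number can all cluster near a single height and contribute nothing to the spherical sweep, so the balancing argument never gets started and the proof does not close.
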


\begin{corollary}
For the $(p,q)$-torus knot $T_{p,q}$, \[ \delta_V[T_{p,q}]\geq \frac{\min(p,q)}{160\sqrt3}-1.\]
\label{cor:pardon}
\end{corollary}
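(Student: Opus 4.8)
\textbf{Proof proposal for Corollary \ref{cor:pardon}.}

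The plan is to combine Pardon's lower bound on the Gromov distortion of the torus knot class $T_{p,q}$ with the second inequality of Theorem \ref{prop:bounding1}, which transfers a Gromov distortion bound to a vertex distortion bound for any lattice knot. Concretely, I would argue as follows. Fix an arbitrary lattice knot conformation $K$ representing $T_{p,q}$. By Theorem \ref{prop:bounding1}, $\delta_V(K) \geq \delta(K)/\sqrt3 - 1$. Since $K$ is in particular a tame conformation of $T_{p,q}$, Pardon's theorem gives $\delta(K) \geq \delta[T_{p,q}] \geq \min(p,q)/160$. Chaining these, $\delta_V(K) \geq \min(p,q)/(160\sqrt3) - 1$ for every lattice conformation $K$ of $T_{p,q}$.

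Taking the infimum over all lattice conformations representing the class then yields
\[
\delta_V[T_{p,q}] = \inf_{K \in [T_{p,q}]} \delta_V(K) \geq \frac{\min(p,q)}{160\sqrt3} - 1,
\]
since the right-hand side is a constant independent of the particular conformation. This is exactly the claimed bound.

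The only subtlety worth checking — and what I would regard as the main (minor) obstacle — is the interface between the two notions of ``knot class'': Pardon's infimum $\delta[T_{p,q}]$ is taken over all tame conformations of the torus knot, whereas $\delta_V[T_{p,q}]$ restricts to lattice conformations. Since every lattice knot is a polygonal, hence tame, knot, the set of lattice conformations of $T_{p,q}$ is a subset of the conformations Pardon considers, so the bound $\delta(K) \geq \delta[T_{p,q}]$ applies to each lattice $K$ individually; this is all that is needed, since Theorem \ref{prop:bounding1} is applied to a fixed lattice knot before any infimum is taken. (One does not need, and should not claim, that $\delta[T_{p,q}]$ restricted to lattice knots equals the unrestricted infimum — only the one-sided comparison is used.) Everything else is a direct substitution, so the corollary follows immediately.
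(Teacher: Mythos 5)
Your argument is correct and is exactly the (implicit) proof the paper intends: apply Theorem \ref{prop:bounding1} to an arbitrary lattice conformation of $T_{p,q}$, plug in Pardon's bound $\delta(K)\geq\delta[T_{p,q}]\geq\min(p,q)/160$, and take the infimum over lattice conformations. Your remark that lattice knots form a subset of the tame conformations over which Pardon's infimum is taken, so only the one-sided comparison is needed, is the right way to handle the only subtlety.
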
 

Conjecture 1 of \cite{Campisi_2021} purported, \[ \delta_V[T_{p,q}]\to\infty \ \text{as} \ p,q\to\infty.\] Corollary \ref{cor:pardon} affirms this conjecture.

\begin{figure}[ht]
\centering

\begin{overpic}[unit=.5mm,scale=.25]{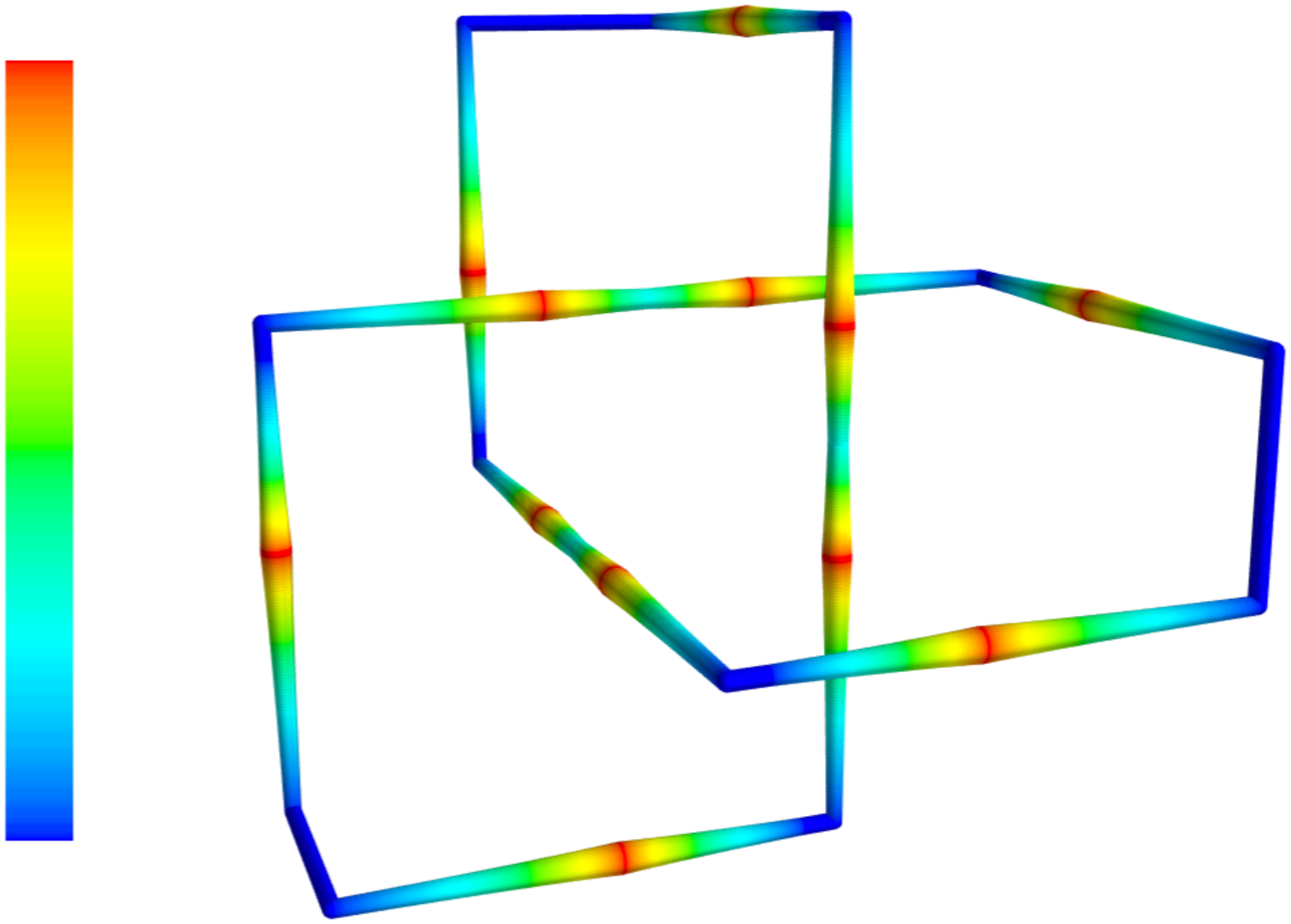}\put(-10,93){9.90}\put(-10,11){5.45}\put(-10,53){7.82}

\end{overpic}
\caption{Trefoil colored by $\rho_1(x,\cdot)$'s maximum at $x$.}
\label{fig:trefoil}
\end{figure}

 Our vertex distortion calculator is available here: \\  \url{https://github.com/sava-1729/lattice-knots-vis}. We conclude with the program's algorithm.

\vspace{4mm}
\RestyleAlgo{ruled}

\SetKwComment{Comment}{/* }{ */}

\begin{algorithm}[H]
\caption{Computing the Vertex Distortion of a Lattice Knot}\label{alg:two}
\KwData{$(\mathbf{v_n}\ |\ 1 \leq n \leq N$), the ordered sequence of vertices of an oriented knot $K$}
\KwResult{$\delta_V(K) = \max_{1\leq i,j\leq N} \rho_1(\mathbf{v_i}, \mathbf{v_j}) = \delta$, $V_\delta(K) = \{\{\mathbf{p}, \mathbf{q}\} \in V(K) \times V(K)\ |\ \rho_1(\mathbf{p}, \mathbf{q}) = \delta_V(K)\} = S$}
$\delta \gets 1$\;
$S \gets \varnothing$\;
\For{$d_K \leftarrow \lfloor \nicefrac{N}{2} \rfloor$ \KwTo $1$}{
    \For{$i \leftarrow 0$ \KwTo $N-1$}{
        $j \gets i - d_K \pmod{N}$\;
        $(x_1,y_1,z_1) \gets \mathbf{v_i}$\;
        $(x_2,y_2,z_2) \gets \mathbf{v_j}$\;
        $d_1 \gets |x_1-x_2| + |y_1-y_2| + |z_1-z_2|$\;
        \If{$d_1 > 0$}{
            $\rho_1 \gets \nicefrac{d_K}{d_1}$\;
            \If{$\rho_1 > \delta$}{
                $\delta \gets \rho_1$\;
                $S \gets \{\{\mathbf{v_i}, \mathbf{v_j}\}\}$\;
            }
            \ElseIf{$\rho_1 = \delta$ {\bf and} $\{\mathbf{v_i}, \mathbf{v_j}\} \notin S$}{
                $S \gets S \cup \{\{\mathbf{v_i}, \mathbf{v_j}\}\}$\;
            }
        }
    }

}

\end{algorithm}

\bibliographystyle{hplain}   
    \bibliography{vertexunknot}

@article{Campisi_2021,
	abstract = { The vertex distortion of a lattice knot is the supremum of the ratio of the distance between a pair of vertices along the knot and their distance in the ℓ1-norm. Inspired by Gromov, Pardon and Blair--Campisi--Taylor--Tomova, we show that results about the distortion of smooth knots hold for vertex distortion: the vertex distortion of a lattice knot is 1 only if it is the unknot, and there are minimal lattice-stick number knot conformations with arbitrarily high distortion. },
	author = {Campisi, Marion and Cazet, Nicholas},
	date-added = {2021-10-26 18:05:35 -0700},
	date-modified = {2021-10-26 18:09:53 -0700},
	doi = {10.1142/S0218216521500437},
	journal = {Journal of Knot Theory and Its Ramifications},
	number = {07},
	pages = {2150043},
	title = {Vertex distortion of lattice knots},
	volume = {30},
	year = {2021},
	Bdsk-Url-1 = {https://doi.org/10.1142/S0218216521500437}}

@article{gromov1983,
	author = {Mikhael Gromov},
	date-added = {2021-09-19 00:45:57 -0700},
	date-modified = {2021-09-19 00:46:11 -0700},
	doi = {10.4310/jdg/1214509283},
	journal = {Journal of Differential Geometry},
	number = {1},
	pages = {1 -- 147},
	publisher = {Lehigh University},
	title = {Filling Riemannian Manifolds},
	url = {https://doi.org/10.4310/jdg/1214509283},
	volume = {18},
	year = {1983},
	Bdsk-Url-1 = {https://doi.org/10.4310/jdg/1214509283}}

@article{Denne_2004,
	abstract = {The distortion of a curve measures the maximum arc/chord length ratio. Gromov showed that any closed curve has distortion at least π/2 and asked about the distortion of knots. Here, we prove that any nontrivial tame knot has distortion at least 5π/3; examples show that distortion under 7.16 suffices to build a trefoil knot. Our argument uses the existence of a shortest essential secant and a characterization of borderline-essential arcs.},
	author = {Elizabeth Denne and John M. Sullivan},
	date-added = {2021-09-19 00:40:20 -0700},
	date-modified = {2021-09-19 00:40:29 -0700},
	issn = {00029939, 10886826},
	journal = {Proceedings of the American Mathematical Society},
	number = {3},
	pages = {1139--1148},
	publisher = {American Mathematical Society},
	title = {The Distortion of a Knotted Curve},
	url = {http://www.jstor.org/stable/20535841},
	volume = {137},
	year = {2009},
	Bdsk-Url-1 = {http://www.jstor.org/stable/20535841}}

@article{Blair_2020,
	abstract = {Abstract We extend techniques due to Pardon to show that there is a lower bound on the distortion of a knot in R3 proportional to the minimum of the bridge distance and the bridge number of the knot. We also exhibit an infinite family of knots for which the minimum of the bridge distance and the bridge number is unbounded and Pardon's lower bound is constant.},
	author = {Blair, Ryan and Campisi, Marion and Taylor, Scott A. and Tomova, Maggy},
	date-added = {2021-09-19 00:36:03 -0700},
	date-modified = {2021-09-19 00:37:05 -0700},
	doi = {https://doi.org/10.1112/topo.12139},
	journal = {Journal of Topology},
	keywords = {57M27 (primary)},
	number = {2},
	pages = {669-682},
	title = {Distortion and the bridge distance of knots},
	volume = {13},
	year = {2020},
	Bdsk-Url-1 = {https://londmathsoc.onlinelibrary.wiley.com/doi/abs/10.1112/topo.12139},
	Bdsk-Url-2 = {https://doi.org/10.1112/topo.12139}}

@article{Pardon_2011,
	author = {Pardon, John},
	date-added = {2021-09-17 00:22:53 -0700},
	date-modified = {2021-09-19 00:50:25 -0700},
	issn = {0003-486X},
	journal = {Annals of Mathematics},
	number = {1},
	pages = {637-646},
	publisher = {Annals of Mathematics, Princeton U},
	title = {On the distortion of knots on embedded surfaces},
	url = {http://dx.doi.org/10.4007/annals.2011.174.1.21},
	volume = {174},
	year = {2011},
	Bdsk-Url-1 = {http://dx.doi.org/10.4007/annals.2011.174.1.21}}

\end{document}